\documentclass[12pt]{article}
\usepackage{a4wide}

\usepackage[a4paper, margin=2.3cm]{geometry}
\usepackage{amsmath,amssymb,amsthm}
\usepackage{color}
\usepackage{parskip}
\usepackage{parskip}
\usepackage{setspace}
\usepackage{graphicx}
\usepackage{cases}
 

\newtheorem{theorem}{Theorem}[section]

\newtheorem{lemma}[theorem]{Lemma}
\newtheorem{definition}[theorem]{Definition}

\newtheorem*{definition*}{Definition}

\def\R{\mathbb{R}}

\begin{document}
\title{On three-variable expanders over finite valuation rings}

\author{
 Nguyen Van The \thanks{VNU University of Science, Vietnam National University, Hanoi. Email: nguyenvanthe\_t61@hus.edu.vn} \and Phuc D Tran \thanks{Department of Mathematics \& Sciences, American University in Bulgaria. Email: pnt170@aubg.edu} \and Le Quang Ham \thanks{VNU University of Science, Vietnam National University, Hanoi. Email: hamlaoshi@gmail.com} \and Le Anh Vinh\thanks{Vietnam Institute of Educational Sciences. Email: vinhle@vnies.edu.vn. }}
\date{}
\maketitle  

\begin{abstract}
Let $\mathcal{R}$ be a finite valuation ring of order $q^r$. 
In this paper, we prove that for any quadratic polynomial $f(x,y,z) \in \mathcal{R}[x,y,z]$ that is of the form $axy+R(x)+S(y)+T(z)$ for some one-variable polynomials $R, S , T$, we have 
\[ \left|f(A,B,C)\right| \gg \min\left\{ q^r, \frac{|A||B||C|}{q^{2r-1}}\right\}\]
for any $A, B, C \subset \mathcal{R}$.
We also study the sum-product type problems over finite valuation ring $\mathcal{R}.$ More precisely, we show that for any $A \subset \mathcal{R}$ with $|A| \gg q^{r-1/3}$ then
$$\max\{ |A \cdot A|, |A^d + A^d|\},\max\{ |A + A|, |A^2 + A^2|\},\max\{|A-A|,|AA+AA|\} \gg |A|^{2/3}q^{r/3},$$
and  $|f(A) + A| \gg |A|^{2/3}q^{r/3}$ for any one variable quadratic polynomial $f$.
\end{abstract}

\section{Introduction}

Let $\mathbb{F}_q$ be a finite field of order $q$ where $q$ is an odd prime power. Given a function $f \colon \mathbb{F}_q^d\to \mathbb{F}_q$, the set
\[f(A, \ldots, A)=\{f(a_1, \ldots, a_d)\colon a_1, \ldots, a_d\in A\},\]
is called the image of the set $A^d\subset \mathbb{F}_q^d$ under the function $f$. We will use the following definition of expander polynomials, which can be found in \cite{hls}. 
\bigskip
\begin{definition}
Let $f$ be a function from $\mathbb{F}_q^d$ to $\mathbb{F}_q$. 
\begin{itemize}
\item[1.] The function $f$ is called a strong expander with the exponent $\varepsilon>0$ if for all $A\subset \mathbb{F}_q$ with $|A|\gg q^{1-\varepsilon}$, one has $|f(A, \ldots, A)|\ge q-k$ for some fixed constant $k$. 
\item[2.] The function $f$ is called a moderate expander with the exponent $\varepsilon>0$ if for all $A\subset \mathbb{F}_q$ with $|A|\gg q^{1-\varepsilon}$, one has $|f(A, \ldots, A)|\gg q$.
\end{itemize}
\end{definition}
Here and throughout, $X \ll Y$ means that there exists  some absolute constant $C_1>0$ such that $X \leq C_1Y$, $X \gtrsim Y$ means $X\gg (\log Y)^{-C_2} Y$ for some absolute constant $C_2>0$, and  $X\sim Y$ means $Y\ll X\ll Y$.

The study of expander polynomials over finite fields has been of much research interest in recent years. For two variable expanders, Tao \cite{tao} gave a general result that for any polynomial $P(x, y)\in \mathbb{F}_q[x, y]$ that is not one of the forms $Q(F_1(x)+F_2(y))$ and $Q(F_1(x)F_2(y))$ for some one variable polynomials $Q, F_1, F_2$, we have 
\[|P(A, A)|\gg q,\]
under the assumption $|A|\gg q^{1-\frac{1}{16}}$. This implies that such polynomials $P(x,y)$ are moderate expanders with the exponent $\varepsilon=1/16$. 

For three variable expanders over prime fields, Pham, de Zeeuw, and the fourth listed author \cite{pham} showed that any quadratic polynomial in three variables $P\in \mathbb{F}_p[x,y,z]$ that is not independent on any variable and that does not have the form $G(H(x)+K(y)+L(z))$ for some one variable polynomials $G, H, K, L$ is a moderate expander with the exponent $\epsilon=1/3$. Note that, the exponent $1/3$ can be improved in the case of rational function expanders. More precisely, Rudnev, Shkredov, and Stevens \cite{RSS} showed that the function $(xy-z)(x-t)^{-1}$ is a moderate expander with the exponent $\epsilon=17/42$ over prime fields.

Over general finite fields, for three and four variable expanders, there are several known families of moderate expanders with the exponent $\varepsilon = 1/3$ constructed by various authors. For example, the expander $(x - y)(z-t)$ is constructed by Bennett, Hart, Iosevich, Pakianathan, and Rudnev \cite{bennett}, the expander $xy+zt$ is constructed by Hart and Iosevich \cite{ha2}, the expander $x+yz$ is constructed by Shparlinsk \cite{shpas}, and the expanders $x(y+z)$ and $x+(y-z)^2$ are constructed by the fourth listed author \cite{vinh2}. Using methods from spectral graph theory, one can break the exponent $1/3$ by showing that $(x-y)^2+zt$ is a moderate expander with $\varepsilon=3/8$ \cite{vinh2}. Until now, there is no constructed moderate expander with a better exponent.

Let $\mathcal{R}$ be a finite valuation ring of order $q^r.$ Throughout, $\mathcal{R}$ is assumed to be commutative, and to have an identity. Let us denote the set of units, non-units in $\mathcal{R}$ by $\mathcal{R}^*, \mathcal{R}^0,$ respectively. 

In the setting of finite valuation ring, the study of expanding polynomials  have been of great interest and was considered in many different context by various authors in the literature. In \cite{ham}, Pham, the first and the fourth listed authors concluded a result for two-variable polynomials. More precise, the third and fourth authors showed that for any set $A \subset \mathcal{R}\setminus \left\{ \mathcal{R}^0, \mathcal{R}^0 - 1\right\}$ then 
\[ |A(A+1)| \gg \min\left\{ \sqrt{q^r|A|}, \frac{|A|^2}{\sqrt{q^{2r-1}}}\right\}.\] 

For three-variable polynomials, Yazici \cite{EAY} developed a point-plane incidence estimate over finite valuation ring and showed that the function $xy+z$ is a moderate expander over $\mathcal{R}$ with the exponent $1/3$. More precisely, she proved the following result.   

\begin{theorem}[\text{\cite[Theorem 1.1]{EAY}}]\label{cited.theo1}
 Let $\mathcal{R}$ be a finite valuation ring of order $q^r$ and $A,B,C$ be the subsets of $\mathcal{R}$, then 
\begin{equation*}
|AB+C| \gg \min \left\{ q^r, \frac{|A||B||C|}{q^{2r-1}} \right\}. 
\end{equation*}
\end{theorem}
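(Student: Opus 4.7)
My strategy is to reduce the statement to a point-plane incidence count in $\mathcal{R}^{3}$ and to apply a Vinh-type incidence estimate over the valuation ring.

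Set $N(\lambda)=|\{(a,b,c)\in A\times B\times C:ab+c=\lambda\}|$, so that $\sum_{\lambda\in AB+C}N(\lambda)=|A||B||C|$. The Cauchy--Schwarz inequality yields
\[
|A|^{2}|B|^{2}|C|^{2}\;\le\;|AB+C|\cdot E(A,B,C),\qquad E(A,B,C):=\sum_{\lambda}N(\lambda)^{2},
\]
so the theorem reduces to showing $E(A,B,C)\ll \frac{|A|^{2}|B|^{2}|C|^{2}}{q^{r}}+|A||B||C|\,q^{2r-1}$, as balancing the two terms yields the claimed minimum.

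Next, I would view $E(A,B,C)$ as a point-plane incidence count. The quantity $E(A,B,C)$ enumerates sextuples $(a,a',b,b',c,c')$ satisfying $ab+c=a'b'+c'$, and rewriting this relation as $bX-a'Y+Z=c'$ with $X=a$, $Y=b'$, $Z=c$ identifies $E(A,B,C)$ with the number of incidences $I(\mathcal{P},\mathcal{L})$, where $\mathcal{P}=\{(a,b',c):a\in A,b'\in B,c\in C\}$ is a set of $|A||B||C|$ points in $\mathcal{R}^{3}$ and $\mathcal{L}$ is the family of $|A||B||C|$ planes $bX-a'Y+Z=c'$ indexed by $(a',b,c')\in A\times B\times C$. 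The desired bound on $E(A,B,C)$ will then follow from a Vinh-type point-plane incidence estimate
\[
I(\mathcal{P},\mathcal{L})\;\ll\;\frac{|\mathcal{P}|\,|\mathcal{L}|}{q^{r}}+q^{2r-1}\sqrt{|\mathcal{P}|\,|\mathcal{L}|}
\]
over $\mathcal{R}^{3}$.

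I would establish this incidence bound spectrally: the bipartite point-plane incidence graph on $\mathcal{R}^{3}$ is regular, and by the expander-mixing lemma the task reduces to bounding its second-largest eigenvalue, which in turn is controlled by bilinear exponential sums over the additive characters of $\mathcal{R}$. The main obstacle lies in this character-sum step: unlike over $\mathbb{F}_{q}$, where every non-trivial additive character produces square-root cancellation, a non-trivial character of $\mathcal{R}$ may be trivial on a non-zero ideal, so its cancellation is governed by the annihilator of the inner variable rather than by the whole ring. I would handle this by stratifying the non-trivial characters according to the size of their conductor and estimating the contribution from each stratum separately; the weakest stratum, associated with characters trivial on a high power of the maximal ideal $\mathcal{R}^{0}$, is precisely what forces the $q^{2r-1}$ factor appearing in the denominator of the theorem.
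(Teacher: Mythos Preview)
Your proposal is correct and follows essentially the same route as the paper: the paper treats this statement as the special case $f(x,y,z)=xy+z$ of Theorem~\ref{mainth3-1}, whose proof is precisely your Cauchy--Schwarz reduction to the energy $E(A,B,C)$ followed by the identification of $E$ with a point--plane incidence count in $\mathcal{R}^3$ and an appeal to the Vinh-type bound (Theorem~\ref{P-P-Y}). The only difference is that the paper quotes the incidence estimate as a black box from Yazici, whereas you go on to sketch its spectral/character-sum proof; your stratification of the non-trivial characters by conductor is exactly the mechanism responsible for the $q^{2r-1}$ term, so that part is accurate as well.
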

In \cite{anh}, Anh and her colleagues proved the following result. 
\begin{theorem}[\text{\cite[Lemma 4.4]{anh}}]\label{Anh.Theo}
Let $X, Y, Z$ be sets in $\mathcal{R}.$ We have
\[ \left| (X-Y)^2+Z \right| \gg \min\left\{ q^r, \dfrac{|X||Y||Z|}{q^{2r-1}}\right\}.\]
\end{theorem}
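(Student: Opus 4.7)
The plan is to follow the same strategy as Theorem~\ref{cited.theo1}: introduce a second moment of the representation function, apply Cauchy--Schwarz, and reduce the resulting energy to a point-plane incidence count in $\mathcal{R}^3$ that can be handled by the incidence theorem of Yazici~\cite{EAY}.

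I would set $N := |(X-Y)^2 + Z|$ and introduce the representation function
\[
\nu(t) := \bigl|\{(x,y,z) \in X \times Y \times Z : (x-y)^2 + z = t\}\bigr|,
\]
so that $\sum_t \nu(t) = |X||Y||Z|$ is supported on a set of size $N$. Cauchy--Schwarz then yields
\[
N \cdot E \;\geq\; \bigl(|X||Y||Z|\bigr)^2, \qquad E := \sum_t \nu(t)^2,
\]
and it therefore suffices to prove the second-moment bound
\[
E \;\ll\; \frac{|X|^2|Y|^2|Z|^2}{q^r} + q^{2r-1}|X||Y||Z|,
\]
since, on substituting back, this reproduces the two regimes of the minimum in the statement.

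To bound $E$, I would expand it as the number of sextuples in $(X \times Y \times Z)^2$ satisfying $(x_1-y_1)^2 + z_1 = (x_2-y_2)^2 + z_2$, and factor the difference of squares as
\[
\bigl((x_1-y_1)+(x_2-y_2)\bigr)\cdot\bigl((x_1-y_1)-(x_2-y_2)\bigr) \;=\; z_2-z_1.
\]
The change of variables $u = (x_1-y_1)+(x_2-y_2)$, $v = (x_1-y_1)-(x_2-y_2)$ is bijective because $2 \in \mathcal{R}^*$, and it converts the constraint into the bilinear equation $uv = z_2-z_1$, which is exactly the relation governing Theorem~\ref{cited.theo1}. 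After absorbing the weights $r_{X-Y}(\cdot)$ and $r_{Z-Z}(\cdot)$ arising from the fibers of the substitution, the sextuple sum is identified with an incidence count between a weighted point set and a family of planes attached to $uv = w$ in $\mathcal{R}^3$; invoking Yazici's point-plane incidence estimate then delivers the announced upper bound on $E$.

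The main difficulty I anticipate is controlling the degenerate strata in which $u$ or $v$ lies in the maximal ideal $\mathcal{R}^0$ rather than in $\mathcal{R}^*$: there the fibers of the multiplication map $uv = w$ swell, and the point-plane incidence bound loses its saving over the trivial estimate. These strata would have to be stratified by the valuation of $(x_1-y_1)\pm(x_2-y_2)$ and handled either by a direct trivial count or by quotienting down to a smaller residue ring in which the relevant coordinates become units. Balancing these zero-divisor contributions against the ``generic'' incidence count is the step that distinguishes the argument over a finite valuation ring from its familiar prime-field counterpart.
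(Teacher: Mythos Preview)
Your Cauchy--Schwarz reduction to the energy
\[
E=\bigl|\{(x_1,y_1,z_1,x_2,y_2,z_2)\in (X\times Y\times Z)^2:\ (x_1-y_1)^2+z_1=(x_2-y_2)^2+z_2\}\bigr|
\]
is exactly how the paper begins. The divergence is in how you propose to bound $E$.

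The paper does \emph{not} factor the difference of squares. It simply expands $(x-y)^2+z=-2xy+x^2+y^2+z$, recognises this as a special case of the template $axy+R(x)+S(y)+T(z)$ (here $a=-2$, $R(u)=S(u)=u^2$, $T(u)=u$), and then separates variables in the energy equation:
\[
(-2y_1)\,x_1-(-2x_2)\,y_2+\bigl(x_1^2-y_2^2+z_1\bigr)=x_2^2-y_1^2+z_2.
\]
This is already a point--plane incidence relation in $\mathcal{R}^3$: points $(x_1,\,y_2,\,x_1^2-y_2^2+z_1)$ indexed by $X\times Y\times Z$, planes $(-2y_1)X+2x_2\,Y+Z=x_2^2-y_1^2+z_2$ indexed by $Y\times X\times Z$. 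Both families are \emph{unweighted}, of size at most $|X||Y||Z|$, and Yazici's bound gives
\[
E\le I(Q,\Pi)\le \frac{|X|^2|Y|^2|Z|^2}{q^r}+q^{2r-1}|X||Y||Z|
\]
immediately. No weights, no stratification by valuation, no zero-divisor analysis.

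Your route---substituting $u=(x_1-y_1)+(x_2-y_2)$, $v=(x_1-y_1)-(x_2-y_2)$ and reducing to $uv=z_2-z_1$---creates two genuine obstacles that the paper's approach sidesteps. First, the weight on $(u,v)$ is $r_{X-Y}\bigl(\tfrac{u+v}{2}\bigr)\,r_{X-Y}\bigl(\tfrac{u-v}{2}\bigr)$, which is \emph{not} a product weight in $u$ and $v$ separately; this entanglement makes it unclear how to cast $uv=z_2-z_1$ as a point--plane incidence in $\mathcal{R}^3$ at all (you have only four effective parameters, and the bilinear form $uv$ does not split cleanly across a point/plane partition once the weights are coupled). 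Second, the ``main difficulty'' you anticipate---controlling the strata where $u$ or $v$ lies in $\mathcal{R}^0$---is an artefact of the factorisation, not of the problem. In the paper's setup the incidence bound is applied with no unit hypotheses whatsoever, so nothing degenerates on the maximal ideal.

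In short: keep your Cauchy--Schwarz step, but drop the difference-of-squares manoeuvre. Use the cross term $-2xy$ directly as the bilinear pairing; then the energy is an unweighted point--plane count and the result follows in one line from the incidence theorem.
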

As a direct consequence, the polynomial $(x-y)^2+z$ is a moderate expander with the exponent $1/3$. They also provided a family of four variable moderate expanding polynomials over finite valuation rings with the exponent $3/8$. More precisely, they showed that $x(x + t)y + z, x(x + t) + yz, x(x + t)(y + z), y (x(x + t) + z), (x(x + t)- y)^2 + z$ and $(y - z)^2 + x(x + t)$ are moderate expanders over $\mathcal{R}$ with the exponents $3/8.$ 

For more background about this problem and an extensive exploration of the subject over finite valuation rings, we refer the reader \cite{Bor, Erd, Mur, rud2, tao, Thang} and the references therein. 

In this paper, following the spirit of Yazici's point-plane incidence, we will present several results for the large sets. Our first result is the following.
\begin{theorem} \label{mainth3-1}
Let $\mathcal{R}$ be a finite valuation ring of order $q^{r}$. Consider a polynomial
	$$f(x,y,z)=axy + R(x)+S(y)+T(z),$$
here $R, S, T \in \mathcal{R}[u]$ are polynomials of degree at most two, $a \neq 0$, and $T(z)$ is not a constant with the highest coefficient $m \in \mathcal{R}^*.$ Suppose that $A,B, C\subseteq \mathcal{R}$ and we further assume that $|C|\geq 2q^{r-1}$ if $T$ has degree two. We have the following estimation
	\begin{align*}
	|f(A,B,C)| \geq \frac{1}{8} \min \left\{ q^{r}, \frac{|A||B||C|}{q^{2r-1}} \right\}.
	\end{align*}
\end{theorem}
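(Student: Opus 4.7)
The plan is to carry out an energy/Cauchy--Schwarz argument and to reduce the resulting counting problem to a point--plane incidence count of the type handled by Yazici in \cite{EAY} that underlies Theorem~\ref{cited.theo1}. Write $N(t):=|\{(x,y,z)\in A\times B\times C:f(x,y,z)=t\}|$, so that $\sum_t N(t)=|A|\,|B|\,|C|$. Cauchy--Schwarz gives $|f(A,B,C)|\geq(|A|\,|B|\,|C|)^2/E$ with $E:=\sum_t N(t)^2$, and the theorem reduces to proving the energy bound
\[
E\;\ll\;\frac{|A|^2|B|^2|C|^2}{q^r}+|A|\,|B|\,|C|\,q^{2r-1}.
\]
Balancing the two terms against the Cauchy--Schwarz lower bound yields the claimed $\min\{q^r,|A|\,|B|\,|C|/q^{2r-1}\}$, with the explicit constant $1/8$ obtained by tracking constants through the incidence inequality.

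The energy $E$ counts sextuples $(x_1,y_1,z_1,x_2,y_2,z_2)\in(A\times B\times C)^2$ with $f(x_1,y_1,z_1)=f(x_2,y_2,z_2)$. Writing $R(u)=r_2u^2+r_1u+r_0$ and analogously for $S$ and $T$ (with leading coefficient $m\in\mathcal{R}^*$), and using the symmetric identities $x_1y_1-x_2y_2=\tfrac12[(x_1-x_2)(y_1+y_2)+(x_1+x_2)(y_1-y_2)]$ and $R(x_1)-R(x_2)=(x_1-x_2)(r_2(x_1+x_2)+r_1)$, I would pass to the difference variables $\alpha=x_1-x_2$, $\beta=y_1-y_2$, $\gamma=z_1-z_2$ and the sum variables $u=x_1+x_2$, $v=y_1+y_2$, $w=z_1+z_2$ (using that $2\in\mathcal{R}^*$). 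In these coordinates the defining equation collapses to the affine--linear relation in $(u,v,w)$
\[
\bigl(\tfrac{a\beta}{2}+r_2\alpha\bigr)\,u+\bigl(\tfrac{a\alpha}{2}+s_2\beta\bigr)\,v+(m\gamma)\,w\;=\;-(r_1\alpha+s_1\beta+t_1\gamma),
\]
a plane in $\mathcal{R}^3$ whose normal vector is determined by $(\alpha,\beta,\gamma)$. Organising $E$ as an incidence count between the multiset of points $(u,v,w)$ (obtained by inverting the sum/difference substitution from $A^2\times B^2\times C^2$) and the multiset of the planes above (indexed by $(\alpha,\beta,\gamma)\in(A-A)\times(B-B)\times(C-C)$ with the appropriate weights), the point--plane incidence estimate of Yazici from \cite{EAY}---the same ingredient behind the proof of Theorem~\ref{cited.theo1}---delivers the desired bound on $E$.

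The main obstacle I anticipate is the treatment of the \emph{degenerate} contribution, where the normal vector $(\tfrac{a\beta}{2}+r_2\alpha,\,\tfrac{a\alpha}{2}+s_2\beta,\,m\gamma)$ has no unit entry, so that the ``plane'' above is not a genuine affine hyperplane in $\mathcal{R}^3$. Over a field this degeneracy would force $\alpha=\beta=\gamma=0$ and contribute only a negligible diagonal term, but over the valuation ring $\mathcal{R}$ the maximal ideal of non-units can conspire with the coefficients to produce a non-trivial degenerate family. This is precisely where the hypotheses $m\in\mathcal{R}^*$ and (when $T$ is quadratic) $|C|\geq 2q^{r-1}$ enter: they guarantee that for a dominant proportion of $\gamma=z_1-z_2$ with $z_1,z_2\in C$, the entry $m\gamma$ is a unit, so that the incidence bound applies and the degenerate contribution is absorbed into the term $|A|\,|B|\,|C|\,q^{2r-1}$. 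Making this absorption quantitative, and checking that no spurious loss arises from the multiplicities coming from the substitution $(x_1,x_2)\mapsto(\alpha,u)$ and its analogues, is the main technical hurdle of the argument.
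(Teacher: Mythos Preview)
Your overall strategy---Cauchy--Schwarz to reduce to an energy bound, then a point--plane incidence---is exactly the paper's, but your choice of variables for the incidence step does not work and is not what the paper does.

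The difficulty is the coupling between your ``point'' and ``plane'' parameters. After the substitution $(x_1,x_2)\mapsto(\alpha,u)$, the pair $(\alpha,u)$ is constrained to lie in the image of $A\times A$ under this bijection; the coordinate $u$ and the plane parameter $\alpha$ cannot be chosen independently. Consequently $E$ is \emph{not} the incidence count between the multiset of points $(u,v,w)$ (weighted by representation functions) and the multiset of planes indexed by $(\alpha,\beta,\gamma)$: that weighted incidence has total weight $(|A||B||C|)^2$ on each side, and Yazici's bound then gives only
\[
I_\omega\;\ll\;\frac{(|A||B||C|)^4}{q^r}+q^{2r-1}(|A||B||C|)^2,
\]
which is far too weak. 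Decoupling by passing to unweighted sets in $(A+A)\times(B+B)\times(C+C)$ and $(A-A)\times(B-B)\times(C-C)$ likewise loses control of $|Q|$ and $|\Pi|$. So the ``organising $E$ as an incidence count'' step, as written, cannot deliver the energy bound you state.

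The paper avoids this by an \emph{asymmetric} splitting of the six variables: rewrite $f(x,y,z)=f(x',y',z')$ as
\[
(ay)\,x-(ax')\,y'+\bigl(R(x)-S(y')+T(z)\bigr)=R(x')-S(y)+T(z'),
\]
take points $\bigl(x,\,y',\,R(x)-S(y')+T(z)\bigr)$ with $(x,y',z)\in A\times B\times C$, and planes $(ay)X-(ax')Y+Z=R(x')-S(y)+T(z')$ with $(x',y,z')\in A\times B\times C$. Two features make this work: the $Z$--coefficient is $1$, so every plane is genuine (your ``degenerate normal'' obstacle simply does not arise), and each of $Q$, $\Pi$ is parametrised by a single copy of $A\times B\times C$, giving $|Q|,|\Pi|\le|A||B||C|$ up to a bounded multiplicity. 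That multiplicity is where the hypotheses actually enter: one must bound the fibres of $z\mapsto T(z)$. If $\deg T=1$ this map is a bijection; if $\deg T=2$ one replaces $C$ by the subset on which $z-u$ is a unit ($u$ the vertex of $T$), which has size at least $|C|-q^{r-1}\ge|C|/2$ because $|C|\ge 2q^{r-1}$, and on that subset the equation $T(z)=n$ has at most two solutions. This is the true role of the assumption on $|C|$, not the unit-ness of $m\gamma$ in a difference variable.
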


Note that, Theorem  \ref{cited.theo1} and Theorem \ref{Anh.Theo} are special cases of Theorem \ref{mainth3-1}.

In \cite{EAY}, Yazici obtained the following sum-product type estimate over an arbitrary finite valuation ring $\mathcal{R}.$

\begin{theorem}[\text{\cite[Theorem 1.2]{EAY}}]\label{cited.theo2}For any subset $A$ of $\mathcal{R}$ with $|A+A||A|^2 >q^{3r-1}$, then 
\begin{equation*}
|A^2+A^2||A+A| \gg q^{\frac{r}{2}}|A|^{\frac{3}{2}}.
\end{equation*}
\end{theorem}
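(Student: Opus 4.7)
The plan is to mimic Garaev's classical sum-product argument in the finite valuation ring setting, using the point-plane incidence estimate of Yazici---which underpins Theorem~\ref{cited.theo1}---in the role played over $\mathbb{F}_p$ by Vinh's point-line incidence bound.

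Concretely, I would introduce the additive energy $E_+(A) := \#\{(a,b,c,d)\in A^4 : a+b = c+d\}$ and apply Cauchy--Schwarz to the representation function $r_+(s) = \#\{(a,b)\in A^2 : a+b=s\}$ of $A+A$ to obtain the standard lower bound $E_+(A) \geq |A|^4/|A+A|$. The key reinterpretation comes from squaring the relation $a+b=c+d$: the expansion $(a^2+b^2) + 2ab = (c^2+d^2) + 2cd$ shows that each additive quadruple records a collision $x_1 + y_1 = x_2 + y_2$ with $x_i \in A^2+A^2$ and $y_i \in 2AA$. This lifts $E_+(A)$ to an incidence count in $\mathcal{R}^3$ between the $|A|^2$ points $\{(a^2+b^2,\,2ab,\,1) : (a,b)\in A^2\}$ and a family of planes parametrised by $(c,d)\in A^2$, with the parameters $|A+A|$ and $|A^2+A^2|$ tracking the effective size of the plane family via the projections $(c,d)\mapsto c+d$ and $(c,d)\mapsto c^2+d^2$.

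Yazici's point-plane incidence estimate then produces an upper bound of the form $E_+(A) \lesssim |P|\,|\Pi|/q^r + q^{r-1/2}\sqrt{|P|\,|\Pi|}$, where the threshold $q^{2r-1}$ appearing throughout Theorem~\ref{cited.theo1} translates here into the condition $|A+A||A|^2 > q^{3r-1}$ under which the second (``error'') term dominates the first (``diagonal'') term. Inserting the Cauchy--Schwarz lower bound $|A|^4/|A+A| \leq E_+(A)$ on the left, plugging in the sizes $|P|, |\Pi|$ in terms of $|A|, |A+A|, |A^2+A^2|$ on the right, and rearranging, delivers the claimed inequality $|A+A|\cdot|A^2+A^2| \gg q^{r/2}|A|^{3/2}$.

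The principal obstacle is choosing the incidence configuration so that the resulting count genuinely measures $E_+(A)$ while placing $|A+A|$ and $|A^2+A^2|$ in the correct slots of the point-plane count---this is a delicate bookkeeping matter involving the multiplicity with which a pair $(c,d)\in A^2$ contributes to a plane, and the two-to-one nature of recovering $(a,b)$ from $(a+b, a^2+b^2)$. A secondary technical point is the verification that the error term of Yazici's incidence bound dominates under the largeness hypothesis $|A+A||A|^2>q^{3r-1}$; without this hypothesis the diagonal term prevails and the estimate collapses to the trivial $|A+A||A^2+A^2|\gg|A|^2$.
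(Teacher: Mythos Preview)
First, note that the paper does not prove this statement itself—Theorem~\ref{cited.theo2} is quoted from \cite{EAY}—but it does prove the sharper estimate $|A^2+A^2|\,|A+A|^2\gg q^r|A|^2$ (Theorem~\ref{coro.A^2+A^2}), whose proof is the natural comparison point.

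Your plan has a genuine gap: the squared identity $(a^2+b^2)+2ab=(c^2+d^2)+2cd$ collapses to $(a+b)^2=(c+d)^2$, so any plane you attach to $(c,d)$ in order to encode this relation depends only on $c+d$. The projection $(c,d)\mapsto c^2+d^2$ that you want to carry $|A^2+A^2|$ into the plane count is not an independent parameter of that plane, and $|A^2+A^2|$ never enters the incidence inequality; you end up bounding $E_+(A)$ purely in terms of $|A+A|$. (There is also a slip in the quoted error term: Yazici's point--plane bound, Theorem~\ref{P-P-Y} here, has error $q^{2r-1}\sqrt{|Q|\,|\Pi|}$, not $q^{r-1/2}\sqrt{|P|\,|\Pi|}$.) The paper's proof of Theorem~\ref{coro.A^2+A^2} sidesteps the collapse by counting solutions to $(x-z)^2+y^2=t$ with $x\in A+A$, $y,z\in A$, $t\in A^2+A^2$: every $(u,v,w)\in A^3$ gives a solution, Cauchy--Schwarz against the \emph{target} set $A^2+A^2$ introduces $|A^2+A^2|$ directly, and the collision equation $-2xz+x^2+z^2+y^2=-2x'z'+x'^2+z'^2+y'^2$ is a nondegenerate three-variable point--plane incidence with $|Q|=|\Pi|\le |A+A|\,|A|^2$.
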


Using the spectral graph method, Hiep \cite{pham1} proved the following improvement of Theorem \ref{cited.theo2}. 
\begin{theorem}[\text{\cite[Theorem 1.5]{pham1}}] \label{coro.A^2+A^2}
Let $A$ be a subset of $\mathcal{R}.$ Suppose that $|A| \geq 2q^{r-1}$ and $|A+A||A|^2 \ge q^{3r-1}$, then we have
\[
|A^2 + A^2||A+A|^2 \geq \dfrac{1}{2}|A|^{2}q^{r}.\]
This implies that           
\[ \max\left\{ |A+A|, |A^2+ A^2|\right\} \geq 2^{-\frac{1}{3}}\,|A|^{2/3}q^{r/3}.\]
\end{theorem}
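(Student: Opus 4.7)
The plan is to deduce Theorem~\ref{coro.A^2+A^2} via the spectral-graph method that underpins Theorem~\ref{cited.theo1}, applied to a Cayley-type graph on $\mathcal{R}^2$ whose edge relation encodes the bilinear equation $a+c = 2bd$. By the identity $2bd = (b+d)^2 - b^2 - d^2$, this relation is equivalent to $a + c + b^2 + d^2 = (b+d)^2$, which couples sums (on the left) with squares of sums (on the right), making the graph a natural tool for detecting $|A+A|$ and $|A^2+A^2|$ simultaneously. A character-sum calculation along the lines of Yazici's argument (adapted to the valuation ring, so that contributions from $\mathcal{R}^0$ are handled separately) shows that this graph is regular of degree $q^r$ and that all nontrivial eigenvalues are $O(q^{r-1/2})$.

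Next, I would apply the expander-mixing lemma to the vertex subset $U := (A+A) \times A$, which has size $|U| = |A+A|\cdot|A|$:
\[
\Bigl|\,e(U,U) - \frac{|U|^2}{q^r}\,\Bigr| \;\ll\; q^{r-1/2}\,|U|.
\]
Under the hypotheses $|A|\geq 2q^{r-1}$ and $|A+A|\,|A|^2 \geq q^{3r-1}$, the main term dominates the error, so $e(U,U) \sim |A+A|^2\,|A|^2 / q^r$. On the other hand, the edges of the graph inside $U$ are, by construction, in bijection with quadruples $(s_1, s_2, b_1, b_2) \in (A+A)^2 \times A^2$ satisfying $s_1 + s_2 + b_1^2 + b_2^2 = (b_1+b_2)^2$; the left-hand side lives in a set of cardinality at most $|(A+A)+(A+A)|\cdot|A^2+A^2| \leq |A+A|\cdot|A^2+A^2|$ (after absorbing the trivial $(A+A)+(A+A)\subseteq 2\,(A+A)$ bound) and the right-hand side in $(A+A)^{\wedge 2}$ of size at most $|A+A|$. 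A Cauchy--Schwarz argument on the representation functions of $A+A$ and $A^2+A^2$ against the $|A|^4$ possible tuples then yields a lower bound of the form $e(U,U) \;\gtrsim\; |A|^4 \big/ \bigl(|A+A|\cdot|A^2+A^2|\bigr)$.

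Matching the two sides of $e(U,U)$ and rearranging gives the sum-product inequality $|A^2+A^2|\cdot|A+A|^2 \geq |A|^2 q^r / 2$, where the explicit constant $1/2$ comes from careful tracking of the constant $1/8$ in Theorem~\ref{mainth3-1} combined with the $|A|\geq 2q^{r-1}$ hypothesis (which exactly suppresses the $\mathcal{R}^0$ error contributions, just as in the proof of Theorem~\ref{mainth3-1}). The second assertion is then immediate via a three-term AM--GM: writing $M := \max\{|A+A|,|A^2+A^2|\}$, one has $M^3 \geq |A+A|^2\cdot|A^2+A^2| \geq |A|^2 q^r / 2$, whence $M \geq 2^{-1/3}\,|A|^{2/3}\,q^{r/3}$.

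The principal obstacle will be aligning the constants in the lower bound for $e(U,U)$ to yield the sharp factor $1/2$: the Cauchy--Schwarz step on the representation functions has to be carried out not just as an asymptotic estimate, but with an explicit constant, and this requires tracking how the representation counts $r_+(s)$ for $A+A$ and the representation counts for sums of squares in $A^2+A^2$ interact. A secondary technical difficulty lies in the character-sum estimate for the eigenvalues of the auxiliary graph, where the presence of zero-divisors in $\mathcal{R}^0$ must be handled analogously to the treatment in the proof of Theorem~\ref{mainth3-1}; once this is done, the remaining pieces assemble cleanly into the claimed inequality.
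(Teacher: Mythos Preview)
Your proposal has a genuine gap. The paper's proof is quite different and much more direct: it applies the point--plane incidence bound (Theorem~\ref{P-P-Y}) to the function $f(x,y,z)=(x-z)^2+y^2=-2xz+x^2+z^2+y^2$ with $(x,y,z)\in(A+A)\times A\times A$. For every $(u,v,w)\in A^3$ one has $f(u+w,v,u)=w^2+v^2\in A^2+A^2$, so Cauchy--Schwarz gives $|A|^6\le |A^2+A^2|\cdot E$, where $E$ is the energy $|\{f(x,y,z)=f(x',y',z')\}|$. Since $f$ is of the form $axy+R(x)+S(y)+T(z)$, the energy $E$ is bounded by a single application of Theorem~\ref{P-P-Y} (exactly as in the proof of Theorem~\ref{mainth3-1}) by $\frac{|A+A|^2|A|^4}{q^r}+q^{2r-1}|A+A||A|^2$. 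Combining and using $|A+A||A|^2\ge q^{3r-1}$ gives the result with the explicit constant $\tfrac12$.

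In your plan, two steps fail. First, the inclusion you call ``trivial'', $(A+A)+(A+A)\subseteq 2\,(A+A)$, is false: the left side is $A+A+A+A$, a genuine fourfold sumset whose size is only controlled by $|A+A|^3/|A|^2$ via Pl\"unnecke, not by $|A+A|$. Second, even granting your lower bound $e(U,U)\gtrsim |A|^4/(|A+A|\,|A^2+A^2|)$ and your expander-mixing upper bound $e(U,U)\lesssim |A+A|^2|A|^2/q^r$, matching them yields $|A+A|^{3}|A^2+A^2|\gtrsim |A|^2q^r$, i.e.\ one power of $|A+A|$ too many, so the resulting $\max$ bound is only $|A|^{1/2}q^{r/4}$. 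The underlying reason is that your edge relation $s_1+s_2=2b_1b_2$ does not force the value $(b_1+b_2)^2$ to land in $A^2+A^2$; the paper avoids this by choosing $f$ so that its image on the triple product is exactly $A^2+A^2$, which is what makes the Cauchy--Schwarz step lossless in $|A+A|$.
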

In this paper, we give another proof of Theorem \ref{coro.A^2+A^2} that is closely related to the proof of Theorem \ref{mainth3-1}. By a slightly different technique, we also give a lower bound in term $A^3 + A^3$ instead of $A^2+A^2$ in Theorem \ref{coro.A^2+A^2}.
\begin{theorem}\label{theo.A^3+A^3}
For $A \subset \mathcal{R}$ with $\dfrac{|A+A|^{4}}{|A|} \ge q^{3r-1},$ then we have 
\[ \max\left\{ |A+A|, |A^3+A^3| \right\} \gg q^{r/10} |A|^{9/10}.\]  
\end{theorem}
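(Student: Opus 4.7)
The key algebraic input is the cubic identity
\[
(a+b)^3 \;=\; 3ab(a+b)\;+\;(a^3+b^3),
\]
which, after setting $s=a+b$, $p=ab$ and $u=a^3+b^3$, rewrites as $3ps+u=s^3$. This is exactly the trilinear-plus-additive form $f(x,y,z) = 3xy+z$ to which Theorem \ref{mainth3-1} applies (with $a=3$, $R=S=0$, $T(z)=z$). The plan parallels the derivation of Theorem \ref{coro.A^2+A^2} from Theorem \ref{mainth3-1} via the analogous quadratic identity $(a+b)^2 = 2ab+(a^2+b^2)$, which is the ``slightly different technique'' alluded to in the text.

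The plan of proof is as follows. First, I would apply Theorem \ref{mainth3-1} to $f(x,y,z)=3xy+z$ with $X=A+A$, $Y=A\cdot A$, $Z=A^3+A^3$, obtaining
\[
|3(A+A)(A\cdot A)+(A^3+A^3)| \;\gg\; \min\!\left\{q^{r},\,\frac{|A+A|\,|A\cdot A|\,|A^3+A^3|}{q^{2r-1}}\right\}.
\]
Second, observe that for every $(a,b)\in A\times A$ the triple $(a+b,ab,a^3+b^3)$ lies in the input box $(A+A)\times(A\cdot A)\times(A^3+A^3)$ and maps under $f$ to $(a+b)^3$. Hence the image of $f$ contains the pointwise cube set $\{s^3:s\in A+A\}$, which under a bounded-fibre hypothesis on the cubing map of $\mathcal{R}$ has size $\gtrsim|A+A|$. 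Third, a pigeonhole argument on the ``rich'' values $s_0^3$ followed by a Cauchy--Schwarz on the representation function $N(\ell)=|\{(x,y,z)\in X\times Y\times Z:3xy+z=\ell\}|$, combined with the lower bound on the image from Theorem \ref{mainth3-1}, yields a ``core'' inequality relating $|A+A|$, $|A\cdot A|$ and $|A^3+A^3|$.

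Fourth, I would absorb the unwanted middle factor $|A\cdot A|$ in one of two ways. Either $|A\cdot A|$ is small, in which case the companion sum--product bound $\max\{|A\cdot A|,|A^3+A^3|\}\gg|A|^{2/3}q^{r/3}$ announced in the abstract (applied with $d=3$) already forces $|A^3+A^3|$ above the target; or $|A\cdot A|$ is comparable to $|A|^{2}$, in which case combining with the hypothesis $|A+A|^{4}/|A|\geq q^{3r-1}$ and writing $M=\max\{|A+A|,|A^3+A^3|\}$ produces a balanced estimate of the shape $M^{10}\gg q^{r}|A|^{9}$, i.e. $M\gg q^{r/10}|A|^{9/10}$.

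The principal obstacle is the appearance of the product set $A\cdot A$ in the middle slot of Theorem \ref{mainth3-1}, which has no direct control from $|A+A|$ and $|A^3+A^3|$ alone. In the quadratic case of Theorem \ref{coro.A^2+A^2} this obstacle is avoided because the identity $(a+b)^{2}=2ab+(a^2+b^2)$ can be absorbed into a pointwise square set $\{s^{2}:s\in A+A\}$ of size $\sim|A+A|$. For cubes the analogous substitution breaks: $3ab(a+b)$ genuinely involves the product $ab$, with no sumset stand-in. Having to dispose of $|A\cdot A|$ via the companion sum--product bound is precisely what causes the exponent in Theorem \ref{theo.A^3+A^3} to drop from $q^{r/3}|A|^{2/3}$ in the quadratic case to the weaker $q^{r/10}|A|^{9/10}$. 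Tracking the case split in the $\min$ of Theorem \ref{mainth3-1}, verifying that the hypothesis $|A+A|^{4}/|A|\geq q^{3r-1}$ puts us in the useful branch, and the bounded-fibre property of the cube map on $\mathcal{R}$ constitute the remaining technical bookkeeping.
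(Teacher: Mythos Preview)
Your approach diverges from the paper's and, as written, has a genuine gap.

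\textbf{The gap.} Steps 2--4 do not produce the inequality $M^{10}\gg q^r|A|^9$ you claim. Carrying out your energy/Cauchy--Schwarz idea: the $|A|^2$ special triples $(a+b,ab,a^3+b^3)$ all land in $(A+A)^3$, so by Cauchy--Schwarz the collision number $E=\sum_\ell N(\ell)^2$ satisfies $E\ge |A|^4/|A+A|$. The incidence bound (the \emph{proof} of Theorem~\ref{mainth3-1}, not its statement) gives, in the favourable branch,
\[
q^r|A|^4 \;\ll\; |A+A|^3\,|A\!\cdot\! A|^2\,|A^3+A^3|^2.
\]
Now your dichotomy fails on both sides. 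If $|A\!\cdot\! A|$ is ``small'' you want to invoke Theorem~\ref{AA.A^d+A^d}, but that theorem carries its own hypothesis $|A\!\cdot\! A|\,|A|^2\ge q^{3r-1}$, which is \emph{not} implied by $|A+A|^4/|A|\ge q^{3r-1}$ (e.g.\ take $|A|\approx q^{r/2}$ and $|A+A|$ near its maximum). If instead $|A\!\cdot\! A|\sim|A|^2$, the display above only yields $M^5\gg q^r$, far short of $M^{10}\gg q^r|A|^9$. No intermediate value of $|A\!\cdot\! A|$ rescues the exponent: the product set is simply uncontrolled by $|A+A|$ and $|A^3+A^3|$. (Separately, note that the statement of Theorem~\ref{mainth3-1} gives only a \emph{lower} bound on $|f(X,Y,Z)|$; combining it with another lower bound coming from $(A+A)^3\subset f(X,Y,Z)$ yields nothing.)

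\textbf{What the paper does instead.} The paper never introduces $A\!\cdot\! A$. It rewrites $a^3+b^3$ with $d=a+b$, $s=a-d/2$ as
\[
a^3+b^3 \;=\; 3d\,s^2+\tfrac{d^3}{4},
\]
replacing the product $ab$ by the \emph{square} of an element $s\in A-\tfrac12(A+A)$, a sumset whose size is controlled by Pl\"unnecke--Ruzsa: $|2A-A-A|\le |A+A|^3/|A|^2$. It then bounds the \emph{triple} energy $E=\#\{a^3+b^3+c^3=a'^3+b'^3+c'^3\}$ via point--plane incidences (points $(s^2,3d',-d'^3/4-c'^3)$, planes $3dX-s'^2Y+Z=-d^3/4-c^3$), obtaining $|Q|,|\Pi|\le |A+A|^4/|A|$. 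A second use of Pl\"unnecke--Ruzsa gives $|A^3+A^3+A^3|\ll |A^3+A^3|^2/|A|$, and Cauchy--Schwarz turns this into $|A^3+A^3|^2/|A|\gg |A|^6/E$. Combining these under the hypothesis $|A+A|^4/|A|\ge q^{3r-1}$ yields
\[
q^r|A|^9 \;\ll\; |A^3+A^3|^2\,|A+A|^8,
\]
hence $\max\{|A+A|,|A^3+A^3|\}\gg q^{r/10}|A|^{9/10}$. The two applications of Pl\"unnecke--Ruzsa (to $2A-A-A$ and to $A^3+A^3+A^3$) are exactly what produces the exponent $10$ and what your outline is missing.
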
 

The idea of Theorem \ref{mainth3-1} also leads to the following results.

\begin{theorem} \label{theo.f(A)+A}
Let $f$ be a one-variable quadratic polynomial with coefficients over $\mathcal{R}$. If $|A+f(A)||A|^{2} \ge q^{3r-1}$, then
\begin{align*}
|f(A)+A| \geq 2^{-\frac{1}{3}}\,|A|^{2/3}q^{r/3}.
\end{align*}
\end{theorem}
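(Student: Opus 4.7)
My plan is to adapt the new proof of Theorem~\ref{coro.A^2+A^2} described in the paper as ``closely related to the proof of Theorem~\ref{mainth3-1}'', replacing the pair $(|A+A|,|A^{2}+A^{2}|)$ by the single quantity $T:=|f(A)+A|$. Write $f(x)=\alpha x^{2}+\beta x+\gamma$ with $\alpha\in\mathcal{R}^{*}$, and assume the standard convention $2\in\mathcal{R}^{*}$, and set $n:=|A|$. The desired bound is equivalent to $T^{3}\geq \tfrac{1}{2}\,n^{2}q^{r}$.

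I would define the representation function $r(s):=\#\{(a,b)\in A^{2}:f(a)+b=s\}$; then $\sum_{s}r(s)=n^{2}$ is supported on $f(A)+A$, so Cauchy--Schwarz gives the additive-energy bound $E:=\sum_{s}r(s)^{2}\geq n^{4}/T$. On the other hand $E$ counts quadruples $(a_{1},a_{2},b_{1},b_{2})\in A^{4}$ with $f(a_{1})-f(a_{2})=b_{2}-b_{1}$; using the factorisation $f(a_{1})-f(a_{2})=(a_{1}-a_{2})(\alpha(a_{1}+a_{2})+\beta)$ and the bijective substitution $(u,v)=(a_{1}-a_{2},\,\alpha(a_{1}+a_{2})+\beta)$ on $\mathcal{R}^{2}$ (valid since $2\alpha\in\mathcal{R}^{*}$), I rewrite $E$ as the number of quadruples $(u,v,b_{1},b_{2})$ with $(u,v)$ in the $n^{2}$-element set $\phi(A\times A)\subseteq(A-A)\times(\alpha(A+A)+\beta)$, $b_{i}\in A$, and $uv=b_{2}-b_{1}$.

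I would then relax $(u,v)$ to the full Cartesian product (which only enlarges the count) and recognise the resulting quantity as a count of solutions to an equation $uv+w_{1}=w_{2}$ over four explicit sets---exactly the kind of count controlled by the spectral estimate that drives the proof of Theorem~\ref{mainth3-1}. Applying that spectral estimate, together with a Cauchy--Schwarz over $w_{2}$ and the Pl\"unnecke--Ruzsa bounds on $|A\pm A|$ derived from $|A+f(A)|=T$ and $|f(A)|\geq n/2$ (since $f$ is at most two-to-one), the hypothesis $Tn^{2}\geq q^{3r-1}$ forces the ``$q^{r}$-term'' of the spectral bound to dominate, and the estimate should rearrange to $E\lesssim n^{2}T^{2}/q^{r}$. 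Combined with $E\geq n^{4}/T$, this gives $T^{3}\gtrsim n^{2}q^{r}$, and a careful tracking of constants delivers the stated $\tfrac{1}{2}$.

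The main obstacle is extracting the precise shape $E\lesssim n^{2}T^{2}/q^{r}$ from the spectral bound: a naive application yields only weaker inequalities (e.g.\ $T^{3}\gtrsim n\,q^{r}$), and gaining the extra factor of $n$ requires combining the Pl\"unnecke estimates $|A\pm A|\lesssim T^{2}/n$ with the regime fixed by the hypothesis $Tn^{2}\geq q^{3r-1}$ in the same way as the balance struck in the proof of Theorem~\ref{coro.A^2+A^2}.
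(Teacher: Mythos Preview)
Your plan has a genuine gap: the energy route you outline cannot reach the exponent $2/3$. Carrying out your sketch exactly---relaxing $(u,v)\in\phi(A\times A)$ to the Cartesian product $(A-A)\times(\alpha(A+A)+\beta)$, applying Pl\"unnecke to get $|A\pm A|\lesssim T^{2}/n$, Cauchy--Schwarz over $w_{2}$ to pass to a six-tuple count, and then the point--plane bound---one finds point and plane sets of size $\lesssim |A-A|\,|A+A|\,n\lesssim T^{4}/n$, hence a six-tuple count $\lesssim T^{8}/(n^{2}q^{r})$ in the main range, and therefore $E\lesssim n^{1/2}\cdot T^{4}/(n\,q^{r/2})$. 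Combined with $E\ge n^{4}/T$ this yields only $T^{5}\gtrsim n^{9/2}q^{r/2}$, i.e.\ $T\gtrsim n^{9/10}q^{r/10}$, which for every $n\le q^{r}$ is \emph{weaker} than $n^{2/3}q^{r/3}$. The obstacle you flag (``gaining the extra factor of $n$'') is real and is not removable in this framework: to obtain $T^{3}\gtrsim n^{2}q^{r}$ from $E\ge n^{4}/T$ you would need $E\lesssim n^{2}T^{2}/q^{r}$, and the relaxation-plus-Pl\"unnecke step already loses too much (your point/plane sets have size $\sim T^{4}/n$ rather than the $Tn^{2}$ that would be required).

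The paper's proof---and the proof of Theorem~\ref{coro.A^2+A^2} you say you are adapting---does \emph{not} go through the additive energy of $f(A)+A$ at all. The key idea you are missing is to introduce a three-variable auxiliary function
\[
g(x,y,z)=f(x-y)+z,\qquad (x,y,z)\in (A+f(A))\times f(A)\times A,
\]
and to observe that every $(u,v,w)\in A^{3}$ produces a solution of $g(x,y,z)\in f(A)+A$ via $x=u+f(v)$, $y=f(v)$, $z=w$, $t=f(u)+w$. This gives $\ge n^{3}$ solutions, so Cauchy--Schwarz yields $n^{6}\le T\cdot N$ where $N$ is a genuine six-tuple count over $\big((A+f(A))\times f(A)\times A\big)^{2}$. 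Because the variables already live in sets of sizes $T$, $|f(A)|\le n$, and $n$, the point and plane sets for Theorem~\ref{P-P-Y} have size $\le Tn^{2}$ \emph{directly}---no Pl\"unnecke, no second Cauchy--Schwarz, no relaxation. The hypothesis $Tn^{2}\ge q^{3r-1}$ then makes the main term dominate and gives $n^{6}\le 2T\cdot (Tn^{2})^{2}/q^{r}$, i.e.\ $T^{3}\ge \tfrac{1}{2}n^{2}q^{r}$, with the exact constant.
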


\begin{theorem}\label{AA+AA}
For $A \subset \mathcal{R}$ with $|A| \geq q^{r-1/3},$ then we have
\[\max\left\{ |A-A|, |AA + AA|\right\}\ge 2^{-\frac{1}{3}} |A|^{2/3}q^{r/3}. \]
\end{theorem}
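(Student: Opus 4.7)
The plan is to establish the stronger three-factor inequality
\[
|A-A|\cdot|AA+AA|^2 \;\geq\; \tfrac{1}{2}|A|^2 q^r,
\]
from which Theorem \ref{AA+AA} follows at once: since $\max\{X,Y\}^3\geq XY^2$, this yields $\max\{|A-A|,|AA+AA|\}\geq 2^{-1/3}|A|^{2/3}q^{r/3}$. This mirrors the form of the inequality that drives Theorem \ref{coro.A^2+A^2}, namely $|A^2+A^2|\cdot|A+A|^2\geq(1/2)|A|^2 q^r$.

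To derive the product inequality, I would apply Theorem \ref{mainth3-1} to the polynomial $f(x,y,z)=xy+z$, which is of the required shape with $a=1$, $R\equiv S\equiv 0$, and $T(z)=z$ (linear, unit leading coefficient $1\in\mathcal{R}^*$). The natural choice of sets is $B_1=A-A$, $B_2=A$, $B_3=AA$, giving image $(A-A)\cdot A+AA$. The algebraic identity $(a-b)c+de=(ac+de)-bc$ shows every element of the image lies in $(AA+AA)-AA$, so the image is controllable by $|AA+AA|$ and $|AA|$. The hypothesis $|A|\geq q^{r-1/3}$ forces $A$ to contain a unit (as $|\mathcal{R}^0|\leq q^{r-1}<|A|$), whence $|AA|\geq|A|$; together with $|A-A|\geq|A|$ this yields $|A-A|\cdot|A|\cdot|AA|\geq|A|^3\geq q^{3r-1}$, so Theorem \ref{mainth3-1} produces $|f(A-A,A,AA)|\geq q^r/8$.

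The remaining and most delicate step is to match this with an upper bound on the image of the form $|A-A|\cdot|AA+AA|^2/(c\,|A|^2)$ for a suitable constant $c$. For $v$ in the image, let $r(v)=\#\{(a,b,c,d,e)\in A^5:(a-b)c+de=v\}$, so $\sum_v r(v)=|A|^5$. Cauchy--Schwarz then gives $|A|^{10}\leq|\mathrm{image}|\cdot\sum_v r(v)^2$, and $\sum_v r(v)^2$ counts octuples satisfying $(ac+de)-bc=(a'c'+d'e')-b'c'$. Rearranging this as $(ac+de)-(a'c'+d'e')=bc-b'c'$ balances a difference of two elements of $AA+AA$ against a difference of two elements of $AA$, and splitting $bc-b'c'=c(b-b')+b'(c-c')$ routes the count through the representation functions of $A-A$ and $AA+AA$. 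This analysis should yield $\sum_v r(v)^2\lesssim |A|^8\cdot|\mathrm{image}|/(|A-A|\cdot|AA+AA|^2)$, which combines with the lower bound $|\mathrm{image}|\geq q^r/8$ to give the target product inequality.

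The main obstacle is executing this multiplicity analysis tightly enough to preserve the constant $1/2$: the double counting has to be organized so that $|A-A|$ appears exactly once and $|AA+AA|$ exactly twice in the upper bound, and the energy estimates for $AA$ and $A$ that are used along the way must not leak extra factors of $q^r$ or $|A|$. Once this bookkeeping is completed, the three-factor inequality, and hence the theorem, follows.
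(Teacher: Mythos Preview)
Your plan has a genuine gap in its second half. The first step is fine: applying Theorem~\ref{mainth3-1} to $f(x,y,z)=xy+z$ on $(A-A)\times A\times AA$ does yield $|(A-A)A+AA|\geq q^r/8$ under the hypothesis. But to turn this into $|A-A|\cdot|AA+AA|^2\geq\tfrac12|A|^2q^r$ you would need an \emph{upper} bound on the image of the form $|(A-A)A+AA|\lesssim|A-A|\cdot|AA+AA|^2/|A|^2$, and your Cauchy--Schwarz step goes the wrong way. The inequality $|A|^{10}\leq|\mathrm{image}|\cdot\sum_v r(v)^2$ gives only a \emph{lower} bound on $|\mathrm{image}|$; and if one inserts your asserted estimate $\sum_v r(v)^2\lesssim|A|^8\cdot|\mathrm{image}|/(|A-A|\cdot|AA+AA|^2)$ back into it, one obtains $|A|^2\,|A-A|\,|AA+AA|^2\lesssim|\mathrm{image}|^2\leq q^{2r}$, which is the inequality in the wrong direction. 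The decomposition $bc-b'c'=c(b-b')+b'(c-c')$ does not lead to a bound on the $10$-tuple count with $|\mathrm{image}|$ appearing on the right with the needed sign, so the ``multiplicity analysis'' you sketch cannot close the argument.

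The paper proceeds differently and more directly. It applies Cauchy--Schwarz to the representation function of $AA+AA$ itself, obtaining $|A|^8\leq|AA+AA|\cdot E$ where $E=\#\{(x,y,z,t,x',y',z',t')\in A^8: xy+zt=x'y'+z't'\}$. It then substitutes $z=x+\alpha$, $z'=x'+\beta$ with $\alpha,\beta\in A-A$, which bounds $E\leq|A-A|^2\cdot N(\alpha,\beta)$ for a suitable pair $(\alpha,\beta)$, where $N(\alpha,\beta)$ counts $6$-tuples $(x,y,t,x',y',t')\in A^6$ with $xy+xt+\alpha t=x'y'+x't'+\beta t'$. This last count is exactly a point--plane incidence count with $|Q|,|\Pi|\leq|A|^3$, so Theorem~\ref{P-P-Y} (not Theorem~\ref{mainth3-1}) applies directly and gives $|AA+AA|\cdot|A-A|^2\geq\tfrac12|A|^2q^r$ --- note the exponents on $|A-A|$ and $|AA+AA|$ are swapped relative to your target, but either version yields the stated max bound. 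The key move you are missing is to start from the energy of $AA+AA$ and insert $A-A$ via the substitution $z\mapsto x+\alpha$, rather than trying to bound an auxiliary image set from above.
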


We also develop a weighted version of Yazici's incidence to prove the sum-product estimates over finite valuation ring. More precisely, we have the following theorems. 
\begin{theorem} \label{AA.A^d+A^d}
Let $d$ be an integer with $d \ge 1$ and $A$ be a subset of $\mathcal{R}.$ Suppose that  $|AA||A|^{2} \geq q^{3r-1}$, then we have
\[ |A^{d}+A^{d}||AA|^{2} \gg q^{r}|A|^{2},\]
or
\[ \max \left\{ |A^{d}+A^{d}|, |AA| \right\} \gg q^{\frac{r}{3}}|A|^{2/3}.\] 
\end{theorem}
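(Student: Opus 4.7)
The second displayed inequality is an immediate corollary of the first: since $\max\{|A^{d}+A^{d}|,|AA|\}^{3}\geq |A^{d}+A^{d}|\cdot|AA|^{2}$, once we have $|A^{d}+A^{d}||AA|^{2}\gg q^{r}|A|^{2}$, cube-rooting gives $\max\{|A^{d}+A^{d}|,|AA|\}\gg q^{r/3}|A|^{2/3}$. The substantive task is therefore the first inequality, and the plan is to prove a weighted version of Yazici's point--plane incidence (the estimate underlying Theorem~\ref{cited.theo1}) and apply it to an energy count.

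Concretely, I aim to establish the weighted inequality: for any nonnegative $\omega\colon \mathcal{R}\to\mathbb{R}_{\geq 0}$ and any $A\subseteq\mathcal{R}$,
\[
\sum_{\substack{a_{1},a_{2},b_{1},b_{2}\in A\\ a_{1}b_{1}+c_{1}=a_{2}b_{2}+c_{2}}}\omega(c_{1})\omega(c_{2})\;\lesssim\;\frac{|A|^{4}\|\omega\|_{1}^{2}}{q^{r}}+q^{2r-1}|A|^{2}\|\omega\|_{2}^{2},
\]
which is obtained by rerunning Yazici's Fourier-analytic argument with $\omega$ in place of the indicator function of the third set; the character-sum estimates over the local ring $\mathcal{R}$ are unaffected by the weighting. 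I apply this with $\omega=r_{A^{d}+A^{d}}$, where $r_{A^{d}+A^{d}}(s):=|\{(u,v)\in A^{2}:u^{d}+v^{d}=s\}|$; the left-hand side then equals the number $T$ of $8$-tuples $(a_{1},b_{1},u_{1},v_{1},a_{2},b_{2},u_{2},v_{2})\in A^{8}$ satisfying $a_{1}b_{1}+u_{1}^{d}+v_{1}^{d}=a_{2}b_{2}+u_{2}^{d}+v_{2}^{d}$. Since $\|\omega\|_{1}=|A|^{2}$ and $\|\omega\|_{2}^{2}\ll|A|^{3}$ (each fibre of $v\mapsto v^{d}$ having size bounded in terms of $d$), we get $T\lesssim|A|^{8}/q^{r}+q^{2r-1}|A|^{5}$.

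For the matching lower bound, setting $M(s):=|\{(a,b,u,v)\in A^{4}:ab+u^{d}+v^{d}=s\}|$ gives $\sum_{s}M(s)=|A|^{4}$, $\operatorname{supp}(M)\subseteq AA+(A^{d}+A^{d})$, and $T=\sum_{s}M(s)^{2}$, whence Cauchy--Schwarz yields $T\geq|A|^{8}/(|AA|\cdot|A^{d}+A^{d}|)$. Comparing the upper and lower bounds on $T$ produces $|AA||A^{d}+A^{d}|\gg q^{r}$ under the hypothesis $|AA||A|^{2}\geq q^{3r-1}$, which is a factor of $|A|$ short of the target $|AA|^{2}|A^{d}+A^{d}|\gg q^{r}|A|^{2}$. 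To recover this factor, I plan to apply the weighted incidence a second time with weight $r_{AA}$ on the ``product side'' (so that the additional $\ell^{2}$ bookkeeping pulls out $|AA|/|A|$ from the multiplicative structure of $AA$), and to combine the two resulting estimates by H\"older; the hypothesis $|AA||A|^{2}\geq q^{3r-1}$ is precisely what guarantees the main term dominates the error in this combined application. The principal obstacle is bookkeeping: extending the weighted Yazici inequality to simultaneously weighted ``$A$'' and ``$C$'' variables, and verifying that the four case-analysis branches (main-main, main-error, error-main, error-error) collapse under the given hypothesis to the single desired bound $|A^{d}+A^{d}||AA|^{2}\gg q^{r}|A|^{2}$.
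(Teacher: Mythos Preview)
Your first pass genuinely falls short, and the fix you sketch is not a fix. From
\[
T\;\ge\;\frac{|A|^{8}}{|AA|\cdot|A^{d}+A^{d}|}
\qquad\text{and}\qquad
T\;\ll\;\frac{|A|^{8}}{q^{r}}+q^{2r-1}|A|^{5},
\]
the main term dominates only when $|A|^{3}\ge q^{3r-1}$, which is \emph{stronger} than the hypothesis $|AA||A|^{2}\ge q^{3r-1}$; and even then you obtain merely $|AA|\,|A^{d}+A^{d}|\gg q^{r}$, not $|AA|^{2}|A^{d}+A^{d}|\gg q^{r}|A|^{2}$. The loss is exactly the step $|AA+A^{d}+A^{d}|\le |AA|\cdot|A^{d}+A^{d}|$, which throws away all interaction between the product and power structures. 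Your proposed ``second weighted application with $r_{AA}$ and H\"older'' is left at the level of a wish: you have not identified a bilinear equation in which $r_{AA}$ can be inserted as a weight to produce a bound that, combined with the first, recovers the missing factor. I do not see how to make that work without a new idea.

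The paper's argument supplies that idea, and it is quite different from yours. Rather than counting your $8$-tuple quantity $T$, it bounds the \emph{four}-tuple energy $E_{d}(A)=|\{a^{d}+b^{d}=c^{d}+e^{d}\}|$ directly, after first reducing to $A\subset\mathcal{R}^{*}$ (the easy case $|A|\ll q^{r-2/5}$ is disposed of using the hypothesis alone). The trick is the multiplicative substitution
\[
b^{d}=\frac{(bf)^{d}}{f^{d}},\qquad e^{d}=\frac{(eg)^{d}}{g^{d}}\qquad(f,g\in A),
\]
which, after summing over $f,g$, embeds $E_{d}(A)$ into an incidence problem between points $(a^{d},\,1/f^{d},\,k^{d})$ and planes $X+h^{d}Y-(1/g^{d})Z=c^{d}$ with $a,c,f,g\in A$ and $h,k\in AA$. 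Now the total weight is $W\asymp |A|^{2}|AA|$, so the weighted point--plane bound gives
\[
|A|^{2}E_{d}(A)\;\ll\;\frac{|A|^{4}|AA|^{2}}{q^{r}}+q^{2r-1}|A|^{2}|AA|.
\]
Combining with $|A|^{4}\le E_{d}(A)\,|A^{d}+A^{d}|$ yields
\[
q^{r}|A|^{2}\;\ll\;|A^{d}+A^{d}|\,|AA|^{2}\Bigl(1+\tfrac{q^{3r-1}}{|AA||A|^{2}}\Bigr),
\]
and now the hypothesis $|AA||A|^{2}\ge q^{3r-1}$ is exactly what is needed. The point is that the substitution forces $|AA|$ into the \emph{incidence count itself} (hence squared in the main term), rather than appearing only through a crude support bound as in your approach.
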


Note that, the condition $|A| \gg q^{r-\frac{1}{3}}$ implies the conditions in previous theorems. Therefore, we can replace all conditions in previous theorem by the condition $|A| \gg q^{r-\frac{1}{3}}$

\section{Preliminaries}
We start this section by recalling the definition of finite valuation rings. 
\begin{definition}
	Finite valuation rings are finite rings that are local and principal.
\end{definition}
Throughout, rings are assumed to be commutative, and to have an identity. Let $\mathcal{R}$ be a finite valuation ring, then $\mathcal{R}$ has a unique maximal ideal that contains every proper ideals of $\mathcal{R}$. This implies that there exists a non-unit $z$ called \textit{uniformizer} in $\R$ such that the maximal ideal is generated by $z$. Moreover, we also note that the uniformizer $z$ is defined up to a unit of $\mathcal{R}$. 

There are two structural parameters associated to $\mathcal{R}$ as follows: the cardinality of the residue field $F=\mathcal{R}/(z)$, and the nilpotency degree of $z$, where the nilpotency degree of $z$ is the smallest integer $r$ such that $z^r=0$.  Let us denote the cardinality of $F$ by $q$. 
In this note, $q$ is assumed to be odd, then $2$ is a unit in $\mathcal{R}$.

If $\mathcal{R}$ is a finite valuation ring, and $r$ is the nilpotency degree of $z$, then we have a natural valuation
\[\nu\colon \mathcal{R}\to \{0,1,\ldots,r\}\]
defined as follows: $\nu(0)=r$, for $x\ne 0$, $\nu(x)=k$ if $x\in (z^k)\setminus (z^{k+1})$. We also note that $\nu(x)=k$ if and only if $x=uz^k$ for some unit $u$ in $\mathcal{R}$. Each abelian group $(z^k)/(z^{k+1})$ is a one-dimensional linear space over the residue field $F=\mathcal{R}/(z)$, thus its size is $q$. This implies that $|(z^k)|=q^{r-k}, ~k=0,1,\ldots,r$. In particular, $|(z)|=q^{r-1}, |\mathcal{R}|=q^r$ and $|\mathcal{R}^{*}|=|\mathcal{R}|-|(z)|=q^r-q^{r-1}$, (for more details about valuation rings, see \cite{ati,bi,fulton,bogan}). The following are some examples of finite valuation rings:
\begin{enumerate}
	\item Finite fields $\mathbb{F}_q$, $q=p^n$ for some $n>0$.
	\item Finite rings $\mathbb{Z}_{p^r}$, where $p$ is a prime.
	\item $\mathcal{O}/(p^r)$ where $\mathcal{O}$ is the ring of integers in a number field and $p\in \mathcal{O}$ is a prime.
	\item $\mathbb{F}_q[x]/(f^r)$, where $f\in \mathbb{F}_q[x]$ is an irreducible polynomial.
\end{enumerate}

For the proofs of Theorem \ref{mainth3-1}, Theorem \ref{theo.f(A)+A}, and Theorem \ref{AA+AA}, we use the following incidence theorem between points and planes in $\mathcal{R}^3,$ which given by Yazici \cite{EAY}.

\begin{theorem}[\text{\cite{EAY}}] \label{P-P-Y}
  Let $\mathcal{R}$ be a finite valuation ring of order $q^r$. Let $Q$ be a set of point in $\mathcal{R}^3$ and $\Pi$ be a set of planes in $\mathcal{R}^{3}$. Then the number of incidences $I(Q,\Pi)$ satisfies:
\begin{align*}
\left| I(Q,\Pi)-\frac{1}{q^{r-1}}\frac{q^2+q+1}{q^3+q^2+q+1}\,|Q||\Pi|\right| \leq  q^{2r-1}\,|Q|^{1/2}|\Pi|^{1/2}. 
\end{align*}                   
Hence,
\begin{align*}
|I(Q,\Pi)| \leq \frac{1}{q^r}\,|Q||\Pi|+q^{2r-1}\,|Q|^{1/2}|\Pi|^{1/2}. 
\end{align*}
\end{theorem}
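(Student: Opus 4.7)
The plan is to cast the incidence count as an edge-count in an auxiliary bipartite graph and apply a spectral (expander mixing) inequality, with the second singular value controlled by Fourier/character sums on $\mathcal{R}$. Concretely, I would set up a bipartite graph $G$ whose vertex sets are $\mathcal{R}^3$ (points) and the collection of all affine planes in $\mathcal{R}^3$, the latter parametrised by quadruples $(a,b,c,d)\in\mathcal{R}^4$ with at least one of $a,b,c$ a unit, taken modulo scalar multiplication by $\mathcal{R}^*$. Put an edge $(x,y,z)\sim [a,b,c,d]$ exactly when $ax+by+cz=d$. A short count shows that each plane contains $q^{2r}$ points (solve for the coordinate with a unit coefficient) and each point lies on the same number of planes, so $G$ is biregular and $I(Q,\Pi)$ is precisely the number of edges $e(Q,\Pi)$ between $Q$ and $\Pi$.

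Next I would apply the bipartite form of the expander mixing lemma,
\[
\left| e(Q,\Pi) - \frac{e(V_1,V_2)}{|V_1|\,|V_2|}\,|Q|\,|\Pi| \right| \leq \lambda\,\sqrt{|Q|\,|\Pi|},
\]
where $\lambda$ is the second largest singular value of the biadjacency matrix of $G$. A direct density computation gives $e(V_1,V_2)/(|V_1||V_2|)=\tfrac{1}{q^{r-1}}\cdot\tfrac{q^2+q+1}{q^3+q^2+q+1}$, which matches the main term in the statement. The whole problem then reduces to the singular value bound $\lambda\leq q^{2r-1}$, and the second (weaker) inequality of the theorem follows from the first because $\tfrac{1}{q^{r-1}}\cdot\tfrac{q^2+q+1}{q^3+q^2+q+1}\leq \tfrac{1}{q^r}$.

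To bound $\lambda$, I would use Fourier analysis on $(\mathcal{R},+)$. Expand
\[
\mathbf{1}[ax+by+cz-d=0]=\frac{1}{q^r}\sum_{\psi}\psi(ax+by+cz-d)
\]
over all additive characters $\psi$ of $\mathcal{R}$, feed this into $\langle M^*Mf,f\rangle$ for $f$ orthogonal to the all-ones vector, and observe that each nontrivial $\psi$ decouples into a product of one-variable character sums in $x,y,z$ and $a,b,c,d$. The orthogonality relation $\sum_{x\in\mathcal{R}}\psi(\alpha x)=q^r$ when $\alpha$ annihilates $\psi$ and $0$ otherwise collapses most terms, leaving a sum stratified by the ideal filtration $\mathcal{R}\supset (z)\supset(z^2)\supset\cdots$ that totals $q^{2r-1}$.

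The main obstacle will be the character-sum bookkeeping in the presence of zero divisors. Over $\mathbb{F}_q$ one enjoys the clean dichotomy ``$\alpha=0$ versus $\alpha\neq 0$'', whereas over $\mathcal{R}$ one must stratify by the valuation $\nu(\alpha)\in\{0,1,\dots,r\}$ and track contributions from normals $(a,b,c)$ where some coordinates are units and others are not, then combine these with the action of the unit scalars used to quotient the plane parameters. Ensuring the stratified contributions recombine to give exactly $\lambda\leq q^{2r-1}$ --- rather than the weaker $q^{3r/2}$-type bound that a naive count would produce --- is the technical heart of the argument, and is precisely Yazici's refinement of Vinh's finite-field spectral method to the valuation ring setting.
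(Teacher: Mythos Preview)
The paper does not prove this theorem at all: it is quoted verbatim from Yazici~\cite{EAY} and used as a black box. So there is no ``paper's own proof'' to compare against; your write-up is a proof sketch for a cited lemma rather than for anything argued in the present paper.

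That said, your outline is the standard route and is essentially the one Yazici follows: interpret incidences as edges in a biregular point--plane bipartite graph over $\mathcal{R}^3$, apply the bipartite expander mixing lemma, compute the density to recover the main term, and bound the second singular value via additive characters of $\mathcal{R}$ stratified along the filtration $(z^k)$. Your identification of the main obstacle---that over a genuine local ring the character-sum dichotomy becomes a valuation-graded sum, and one must track normals with mixed unit/non-unit coordinates after quotienting by $\mathcal{R}^*$---is exactly right, and is where the $q^{2r-1}$ (rather than a cruder power) comes from. One small point to be careful with in an actual write-up: the parametrisation of planes by $[a:b:c:d]$ with at least one of $a,b,c$ a unit, modulo $\mathcal{R}^*$, needs the count $|\{\text{planes}\}|=q^{r-1}(q^{3r}-q^{3(r-1)})\cdot q^r/(q^r-q^{r-1})$ to be done cleanly so that the density really equals $\tfrac{1}{q^{r-1}}\cdot\tfrac{q^2+q+1}{q^3+q^2+q+1}$; this is straightforward but worth stating explicitly rather than asserting.
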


To prove Theorem \ref{AA.A^d+A^d}, we will need the following weitghted version of the point-plane incidences over finite valuation rings.

\begin{theorem}[\textsl{Point-plane incidences - weighted version}] \label{CorY1}
Let $Q, \Pi$ be weighted set of points and planes in $\mathcal{R}^3$ with the weighted integer function $\omega,$ both total weight $W.$ 
Suppose that maximum weights are bounded by $\omega_0 \geq 1.$ Define the number of weighted incidences is 
\begin{align*}
I_{\omega}= \sum_{q \in Q, \, \pi \in \Pi} \omega(q)\omega(\pi)\,\delta_{q\,\pi},
\end{align*}
where
\begin{subnumcases}{\delta_{q \pi}:=}
1 \quad \text{if}\quad  q \in \pi, \notag\\
0 \quad \text{if} \quad q \notin \pi.\notag
\end{subnumcases}
Then the number $I_{\omega}$ of weighted incidences is bounded as follows
\begin{align*}
I_{\omega} = \displaystyle \sum_{ q \in \pi} \omega(q) \omega(\pi) \ll \frac{1}{q^{r}}\,W^{2}+q^{2r-1}W.
\end{align*}
\end{theorem}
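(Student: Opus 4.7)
The plan is to reduce the weighted estimate to the unweighted point--plane incidence bound of Theorem \ref{P-P-Y} by means of a layer-cake decomposition of the integer-valued weights. For each integer $k \in \{1,\ldots,\omega_0\}$, introduce the super-level sets
\begin{align*}
Q_k := \{q\in Q : \omega(q) \geq k\}, \qquad \Pi_l := \{\pi\in \Pi : \omega(\pi)\geq l\}.
\end{align*}
Since the weights are positive integers bounded by $\omega_0$, one has the layer-cake identity $\omega(q) = |\{k : q\in Q_k\}|$, which yields $\sum_{k=1}^{\omega_0} |Q_k| = W$ and, analogously, $\sum_{l=1}^{\omega_0}|\Pi_l| = W$. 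Substituting these into the definition of $I_\omega$ telescopes the weighted count into a double sum of ordinary incidence counts:
\begin{align*}
I_\omega \;=\; \sum_{q\in\pi} \omega(q)\omega(\pi) \;=\; \sum_{k=1}^{\omega_0}\sum_{l=1}^{\omega_0} I(Q_k,\Pi_l).
\end{align*}

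The next step is to apply Theorem \ref{P-P-Y} to each pair $(Q_k,\Pi_l)$, obtaining
\begin{align*}
I(Q_k,\Pi_l) \;\leq\; \frac{|Q_k||\Pi_l|}{q^r} + q^{2r-1}|Q_k|^{1/2}|\Pi_l|^{1/2},
\end{align*}
and to sum these contributions over $k$ and $l$. The main-term sum equals exactly $W^2/q^r$, while Cauchy--Schwarz applied to the sequences $(|Q_k|^{1/2})_{k}$ and $(|\Pi_l|^{1/2})_{l}$ controls the error term by
\begin{align*}
q^{2r-1}\left(\sum_{k=1}^{\omega_0}|Q_k|^{1/2}\right)\left(\sum_{l=1}^{\omega_0}|\Pi_l|^{1/2}\right) \;\leq\; q^{2r-1}\,\omega_0 W,
\end{align*}
which matches the bound stated in Theorem \ref{CorY1} once $\omega_0$ is absorbed into the implicit $\ll$ constant.

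The main obstacle I anticipate is precisely this residual factor of $\omega_0$: the target inequality contains no $\omega_0$, yet the Cauchy--Schwarz step above intrinsically produces one. If $\omega_0$ is simply being absorbed into the $\ll$-notation, the reduction above is already complete; otherwise one would go directly to the spectral argument underlying Theorem \ref{P-P-Y}, insert the weighted vectors $\omega_Q$ and $\omega_\Pi$ into the bilinear form associated with the point--plane incidence operator on $\mathcal{R}^3$, and try to replace the off-diagonal $\ell^2$ estimate by an $\ell^1$ estimate exploiting the nonnegativity of the weights and the constraint $\|\omega_Q\|_1 = \|\omega_\Pi\|_1 = W$. Either route reduces the computation to the same spectral gap already quantified in Theorem \ref{P-P-Y}, so the remaining work is essentially bookkeeping.
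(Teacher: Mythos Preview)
Your layer-cake decomposition is correct and gives the main term $W^2/q^r$ exactly, but your own diagnosis of the obstacle is accurate: the Cauchy--Schwarz step in the error term genuinely costs a factor of $\omega_0$, and the paper's bound is meant to be \emph{uniform} in $\omega_0$ (this is why $\omega_0$ appears as a separate hypothesis rather than as an implicit constant). So as written your argument proves $I_\omega \ll q^{-r}W^2 + q^{2r-1}\omega_0 W$, which is strictly weaker than the statement, though it happens to suffice for the only application in the paper (where $\omega_0 = d^3$ with $d$ fixed).

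The paper avoids this loss by a different, more combinatorial reduction: a weight-rearrangement (``rich selection'') argument. One writes $I_\omega = \sum_{q\in Q}\omega(q)W(q)$ with $W(q)=\sum_{\pi\ni q}\omega(\pi)$, selects the $n=\lceil W/\omega_0\rceil$ points of largest $W(q)$, assigns each of them the maximal weight $\omega_0$, and discards the rest; an elementary inequality shows this cannot decrease $I_\omega$. Repeating on the plane side produces unweighted sets $Q',\Pi'$ of size $n$ with uniform weight $\omega_0$, and a single application of Theorem~\ref{P-P-Y} gives
\[
I_\omega \;\le\; \omega_0^2\Bigl(\tfrac{n^2}{q^r}+q^{2r-1}n\Bigr)
\;\ll\; \tfrac{W^2}{q^r}+q^{2r-1}W,
\]
since $\omega_0 n \le W+\omega_0 \le 2W$. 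The point is that rearrangement converts the problem to a single unweighted instance with $|Q'|=|\Pi'|\approx W/\omega_0$, so the factors of $\omega_0$ cancel exactly in both terms, whereas your layer-cake sum spreads the mass over $\omega_0$ layers and the square-root in the error term then refuses to recombine cleanly. Your alternative suggestion of feeding the weight vectors directly into the bilinear spectral estimate behind Theorem~\ref{P-P-Y} would also work and would likewise eliminate the $\omega_0$; the rearrangement trick is simply the black-box way of achieving the same thing.
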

 
\begin{proof}
In this proof, we will use a weight rearrangement argument. Firstly, we rewrite the number of weighted incidences as $$I_{\omega}=\displaystyle \sum_{q \in Q} \omega (q)\,\left(\sum \omega(\pi)\right)=\displaystyle \sum_{q \in Q} \omega(q)\,W(q),$$
where 
\[ W(q)=\sum_{\pi \in \Pi:\, q \in \pi} \omega(\pi).\]

Since no weight is larger than $\omega_0$, we always can pick a subset $Q' \subset Q$, containing $n = \left\lceil\frac{W}{w_0} \right\rceil$ ``richest'' points $q_1,q_2,...,q_n$ in terms of $W(q)$. We replace $Q$ by $Q'$ by assign to each one of the points in $Q'$ the weight $\omega_0$ and deleting the rest of points in $Q$. Without loss of generality, we can assume that $W(q_n)$ is minimum of $W(q_k)$ for $1 \leq k \leq n$. Note that 
\begin{align*}
I_{\omega}(Q,\Pi)&=  \sum_{q \in Q'} \omega (q) W(q) + \sum_{q \in Q \setminus Q'} \omega(q)\,W(q) \\
&\leq \sum_{q \in Q'} \omega(q) W(q) + \left(W-\sum_{q \in Q'} \omega(q)\right)\, W(q_n)\\
&\leq \sum_{q \in Q'} \omega(q) \left(W(q) - W(q_n)\right) + n \omega_0 W(q_n)  \\
&\leq \sum_{k=1}^{n} \omega_0 W(q_i).
\end{align*}

The number of weighted incidences $I_{\omega}$ will thereby not decrease. Similarly, we can pick a subset $\Pi'$ of the plane set $\Pi$, containing $n$ richest planes in terms of their non-weighted incidences with $Q'$. We also assign the weight $\omega_0$ to each plane in $\Pi'$. We now replace $Q, \Pi$ by $Q',\Pi'$. 

Applying the Theorem \ref{P-P-Y}, we obtain 
\begin{align*}
I_{\omega}(Q,\Pi) &\leq I_{\omega_0,\omega}(Q',\Pi)) \leq I_{\omega_0}(Q',\Pi') \\
&\leq  \frac{1}{q^r} w_0^2 n^2 + q^{2r-1} w_0 n \\
&\ll \frac{1}{q^r}W^2+q^{2r-1}W,
\end{align*}
which completes the proof of Theorem \ref{CorY1}.
\end{proof} 

\section{Proof of Theorem \ref{mainth3-1}}

Let $N$ be the number of tuples $(x,y,z,x',y',z') \in (A\times B \times C)^2$ such that $f(x,y,z)=f(x',y',z')$. We rewrite this equation as
\[ (ay) x-(ax')y'+\left( R(x) -S(y') + T(z) \right) = R(x') - S(y) + T(z'). \]
We define the point set $Q$ and the plane set $\Pi$ as following
\begin{align*}
Q &:=\left\{ \left(x,\, y', \,R(x) -S(y') + T(z)\right)\,: \, (x,y',z) \in A \times B \times C \right\}, \\
\Pi &:= \left\{ (ay) \cdot X - (ax') \cdot Y + Z = R(x') - S(y) + T(z') \,:\, (x',y,z') \in A \times B \times C \right\}.
\end{align*}

For each choice of $x,y'$, and $n$, we count the number of solutions of the equation $T(z) = n$. If deg$(T) = 1$, it is clear that the equation has exactly one solution since its highest coefficient $m$ is a unit of $\mathcal{R}.$ If deg$(T) = 2$, we rewrite the equation as follows
	\begin{align}\label{eq1}
		(z-u)^2=n',
	\end{align}
where $u$ is a fixed number and does not depend on $x,y',n.$ 
If there does not exist $z_1$ such that $z^2_1=n'$ then \eqref{eq1} has no solution. Otherwise, we have
	\begin{align}\label{eq2}
		&(z-u-z_1)(z-u+z_1)=0\\
		&z-u-z_1=u_1 \mu^{\alpha_1}\quad \mbox{and}\quad z-u+z_1=u_2 \mu^{\alpha_2}
	\end{align}
where $\mu$ is \textit{uniformizer} in $\mathcal{R}.$
	It follows that $z-u=2^{-1}(u_1\mu^{\alpha_1}+u_2\mu^{\alpha_2})$ is non unit.
	Since $|C|\geq 2q^{r-1},$ without loss of generality, we can assume that $z-u$ is unit so the equation \eqref{eq2} has at most two solutions. This implies that $T(z) = n$	has at most two solutions. Therefore, each point $(u,v,w) \in Q$ corresponds to at most two points $(x,y',z) \in A \times B \times C $. 
	
	Apply the same argument to the set of planes $\Pi$, we have 
\[|Q|,|\Pi| \leq |A||B||C|,\]
and 
\[ N \leq  4 \cdot I(Q,\Pi),\]
where $I(Q,\Pi)$ is number of incidences between $Q$ and $\Pi$. Now applying the Cauchy-Schwarz inequality and Theorem \ref{P-P-Y}, we get
\begin{align*}
\left(|A||B||C|\right)^{2} &\leq |f(A, B, CC)| \cdot N \\
 &\leq 4 \left| f(A , B, C)\right| \cdot I(Q,\Pi) \\
 &\leq 4 \left| f(A, B, C)\right|\left( \frac{1}{q^r}|Q||\Pi|+q^{2r-1}|Q|^{1/2}|\Pi|^{1/2}\right) \\
 &\leq 4 \left|f(A, B, C)\right|\left(\dfrac{|A|^2|B|^2|C|^2}{q^{r}}+q^{2r-1}|A||B||C| \right).
\end{align*}

This implies that
\[ \left|f(A,B,C)\right| \geq \frac{1}{8} \min \left\{ q^{r}, \frac{|A||B||C|}{q^{2r-1}} \right\},\]
completing the proof of Theorem \ref{mainth3-1}.

\section{Proofs of Theorem \ref{coro.A^2+A^2} and Theorem \ref{theo.A^3+A^3}} 

\begin{proof}[Proof of Theorem \ref{coro.A^2+A^2}.]
Let $f(x,y,z) = (x-z)^2+y^2$, we consider the following equation 
\begin{align} \label{equ2}
f(x,y,z)= t,
\end{align}
where $x \in A+A, \, y,z \in A, \, t \in A^2+A^2.$ For any triple $(u,v,w) \in A$, a solution of the equation \eqref{equ2} is given by $x = u + w \in A + A, \,  y = v \in A,\, z = u \in A,\,  t=w^2+v^2 \in A^2+A^2$. Thus, there are at least $|A|^3$ solutions of the equation \eqref{equ2}. Now applying the Cauchy-Schwarz inequality, we get
\begin{equation}\label{equa3A^2+A^2}
|A|^{6} \leq \left| A^2+A^2\right| \cdot \left|\left\{ (x,y,z,x',y',z') \in \left((A + A)\times A \times A \right)^{2} \,:\, f(x,y,z)=f(x',y',z')\right\}\right|
\end{equation}
We proceed similarly as in the proof of Theorem \ref{mainth3-1} for the specific function 
\[f(x,y,z)=-2xz+x^2+z^2+y^2\]
with $|A|=|B|, |C|=|A+A|.$ It follows from \eqref{equa3A^2+A^2} that
\begin{align*}
|A|^{6} \leq \left|A^2+A^2\right| \cdot \left(\dfrac{|A+A|^{2}|A|^{4}}{q^{r}}+q^{2r-1}\,|A+A||A|^{2}\right). 
\end{align*}
Hence,
\[|A|^2q^r \leq \left|A^2+A^2 \right||A+A|^2\left( 1+\dfrac{q^{3r-1}}{|A+A||A|^2} \right).\]

Thus, under the our assumption $|A+A|.|A|^{2}\geq q^{3r-1},$ we have 
\[|A+A|^{2} |A^2+A^2| \geq \dfrac{1}{2}|A|^{2}q^{r},\]
which completes the proof of Theorem \ref{coro.A^2+A^2}. 
\end{proof}

For the proof of Theorem \ref{theo.A^3+A^3}, we will need the following Plunnecke-Ruzsa inequality. 

\begin{lemma}
Let $A$ and $B$ be finite subsets of an abelian group such that $|A + B| \leq K|A|.$ Then for an arbitrary $0 < \delta < 1$, there is a nonempty set $X \subset A$ such that $|X| \geq (1 - \delta)|A|$, and for any integer $k,$ one has
 \begin{equation}\label{Plun.ine}
 	|X + kB| < \left( \dfrac{K}{\delta}\right)^k |X|.
 \end{equation}   
\end{lemma}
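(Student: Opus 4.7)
The statement is the classical Pl\"unnecke--Ruzsa inequality with an explicit density parameter $\delta$, and my plan follows the now-standard two-stage approach via Petridis's argument, without needing any of the valuation-ring structure.

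\textbf{Stage 1: the basic Pl\"unnecke--Ruzsa bound.} I would first establish that under the hypothesis $|A+B|\le K|A|$, there exists a non-empty $X_0\subseteq A$ such that $|X_0+kB|\le K^k|X_0|$ for every positive integer $k$. Following Petridis, pick $X_0$ to be a non-empty subset of $A$ minimizing the ratio $K_0 := |X_0+B|/|X_0|\le K$. The minimality of $X_0$ yields Petridis's key inequality
\[
|X_0+B+C|\le K_0\,|X_0+C|
\]
for every finite set $C$, which one proves by applying the Ruzsa triangle-style argument to the subset of $X_0$ on which the representation is maximized. Iterating this with $C=(k-1)B,(k-2)B,\dots$ and inducting on $k$ then gives $|X_0+kB|\le K_0^k|X_0|\le K^k|X_0|$.

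\textbf{Stage 2: upgrading to density $1-\delta$.} The set $X_0$ from Stage 1 may be much smaller than $(1-\delta)|A|$. To replace it with a subset $X\subseteq A$ of density at least $1-\delta$, I would use a popularity/Markov step: for each candidate subset $X\subseteq A$, control $|X+B|$ via the representation function of $A+B$, and show that after discarding at most a $\delta$-fraction of the ``heaviest'' points of $A$ we obtain $X$ with $|X|\ge(1-\delta)|A|$ and $|X+B|/|X|\le K/\delta$. Re-running Stage 1 with this new ratio $K/\delta$ in place of $K$ produces the announced bound $|X+kB|\le(K/\delta)^k|X|$, simultaneously for every $k$, because Stage 1 outputs an inequality valid for all $k$ from a single minimizing choice.

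\textbf{Main obstacle.} The delicate point is producing a single $X\subseteq A$ with $|X|\ge(1-\delta)|A|$ that satisfies the stated bound uniformly in $k$, rather than a different subset for each $k$. The way around this is to perform the Markov truncation on $|A+B|$ once (giving one subset $X$), verify that the new ratio $|X+B|/|X|$ is at most $K/\delta$, and then let Stage 1 do the heavy lifting for all $k$ at once from that fixed $X$. Once this bookkeeping is set up, the induction is mechanical and yields the constant $(K/\delta)^k$; any attempt to choose a different $X_k$ for each $k$ would force a union-bound loss that the statement does not permit.
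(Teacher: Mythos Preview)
The paper does not prove this lemma; it is quoted without proof as the classical Pl\"unnecke--Ruzsa inequality and used as a black box. So there is no paper proof to compare against, and your proposal must stand on its own.

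Stage~1 is fine. The gap is in Stage~2. Petridis's argument, applied to a pair $(X,B)$ with $|X+B|\le (K/\delta)|X|$, does \emph{not} conclude $|X+kB|\le (K/\delta)^k|X|$; it only gives that bound for a ratio-minimising subset $X_0\subseteq X$, which may be much smaller than $X$ and hence need not satisfy $|X_0|\ge(1-\delta)|A|$. In other words, ``re-running Stage~1'' on your large $X$ immediately discards the density guarantee you just obtained. (It is a common slip to believe that $|A+B|\le K|A|$ alone forces $|A+kB|\le K^k|A|$; this is false in general, which is exactly why the lemma is non-trivial.) Your ``Main obstacle'' paragraph correctly identifies the difficulty but the proposed fix does not resolve it.

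The standard remedy is to iterate Stage~1 rather than to apply it once after a Markov step. Let $X_0\subseteq A$ be the Petridis minimiser with ratio $K_0$, set $A_1=A\setminus X_0$, let $X_1\subseteq A_1$ be the minimiser there with ratio $K_1$, and continue; the ratios satisfy $K_0\le K_1\le\cdots$ because each $X_i$ was also a candidate at the previous stage. Take $X=X_0\cup\cdots\cup X_j$ for the largest $j$ with $K_j\le K/\delta$. If $|X|<(1-\delta)|A|$ then $Y:=A\setminus X$ has $|Y|>\delta|A|$ and every nonempty subset of $Y$ has ratio exceeding $K/\delta$; in particular $|Y+B|>(K/\delta)\cdot\delta|A|=K|A|\ge|A+B|$, a contradiction. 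For this $X$ one then gets, for every $k$,
\[
|X+kB|\le\sum_{i\le j}|X_i+kB|\le\sum_{i\le j}K_i^{\,k}|X_i|\le (K/\delta)^k|X|,
\]
which is the desired uniform bound with a single $X$.
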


\begin{proof}[\bf Proof of Theorem \ref{theo.A^3+A^3}]
Let $b=d-a$, where $a,b \in A, d \in A+A$. We have
\begin{align*}
a^3+(d-a)^3 &= d\left[a^2-a(d-a)+(d-a)^2 \right] \\
&= d (3a^2-3ad+d^2)\\
&= 3d\left[ (a-d/2)^2+d^2/12 \right]\\
&= 3d(s^2+d^2/12),
\end{align*}
where $s=a-d/2.$ 

Let $E$ be the number of solutions of the following equation 
\begin{equation} \label{equa4}
c^3+b^3+a^3=c'^3+b'^3+a'^3 : (a,b,c,a',b',c') \in A^6.
\end{equation}
By Plunnecke-Ruzsa inequality, we can choose a large subset $A' \subset A$ such that
\begin{equation} \label{iequ4}
|A'^3+A'^3+A'^3|\ll \frac{|A^3+A^3|^2}{|A|}.
\end{equation} 
Putting \eqref{equa4} and \eqref{iequ4} together with the Cauchy-Schwarz inequality,  we have
\begin{equation} \label{Ieu}
\frac{|A^3+A^3|^2}{|A|} \gg \frac{|A|^6}{E}.
\end{equation}
Besides, we can rewrite the equation \eqref{equa4} as $3ds^2+d^3/4+c^3 = 3d's'^2+d'^3/4+c'^3$. This is equivalent to
\begin{align*}
 s^2\cdot(3d) - 3d's'^2-d'^3/4-c'^3 = -d^3/4-c^3.
\end{align*}
Define the point set $Q$ and the plane set $\Pi$ as follows
\begin{align*}
Q &:= \left\{ (s^2,3d',-d'^3/4-c'^3): \, s \in A - \frac{A+A}{2},d' \in A+A, c' \in A \right\}, \\
\Pi &:= \left\{ 3d \cdot X - s'^2 \cdot Y + Z = -d^3/4-c^3:\, s' \in A-\frac{A+A}{2}, d \in A+A, c \in A \right\}.
\end{align*}
For each $(u,v,w)\in Q$, there are at most six triples $(s,d',c') \in \left(A-\dfrac{A+A}{A}\right) \times A+A \times A$ such that $u=s^2,v=3d', w=-d'^3/4-c'^3.$ 

Apply the same argument to the plane set $\Pi$, we have
\begin{equation} \label{equa5}
|Q|,|\Pi| \leq \left|A-\dfrac{A+A}{A}\right|\, |A+A| \,|A|,
\end{equation}
and 
\[ E \leq 36.I(Q,\Pi),\]
where $I(Q,\Pi)$ is the number of incidences between $Q$ and $\Pi$.
Applying the Theorem \ref{P-P-Y}, we obtain 
\begin{equation} \label{iequa7}
E \leq 36 \left( \frac{1}{q^r} |Q||\Pi| +q^{2r-1}|Q|^{1/2}|\Pi|^{1/2} \right).
\end{equation}
Now, we bound the number of values of $s$. Note that 
\begin{equation} \label{iequa5}
\left|A -\frac{A+A}{2}\right| = |2A - A - A| \leq |A+A-A-A| \leq \frac{|A+A|^3}{|A|^2}.
\end{equation}
The last inequality is obtained by applying the Plunnecke-Ruzsa inequality twice.

From \eqref{equa5} and \eqref{iequa5}, we have
\begin{equation} \label{iequ6}
|Q|,|\Pi| \leq \frac{|A+A|^4}{|A|}.
\end{equation}
From \eqref{iequa7} and \eqref{iequ6}, we have
\begin{equation} \label{iequ8}
E \leq 36 \left( \frac{1}{q^r} \frac{|A+A|^8}{|A|^2}+q^{2r-1}.\frac{|A+A|^4}{|A|} \right).
\end{equation}
Putting \eqref{Ieu} and \eqref{iequ8} together, we have 
\[ |A|^7 \ll |A^3+A^3|^2 \cdot E \ll |A^3+A^3|^2 \left( \frac{1}{q^r} \frac{|A+A|^8}{|A|^2}+q^{2r-1} \cdot \frac{|A+A|^4}{|A|} \right). \]
Hence,
\[ q^r|A|^9 \ll \left|A^3+A^3\right|^2|A+A|^8 \left( 1+\frac{q^{3r-1}\,|A|}{|A+A|^4} \right).\]
Therefore, if $q^{3r-1} \ll \frac{|A+A|^4}{|A|}$ then
\[ \max\left\{|A+A|,|A^3+A^3|\right\} \gg q^{r/10}|A|^{9/10},\]
which completes the proof of Theorem \ref{theo.A^3+A^3}.
\end{proof}

\subsection{Proof of Theorem \ref{theo.f(A)+A}}
Without loss of generality, we can assume that $f(x)=ax^{2}+bx$ with $a \neq 0$. We consider the following equation
\begin{equation}\label{equa1}
a(x-y)^{2} + b(x-y) + z = t,
\end{equation}
in which $x \in A+f(A),\, y \in f(A),\, z \in A,$ and $t \in A+f(A)$. Note that for any $u,v,w \in A$, a solution of the equation \eqref{equa1} is given by $x = u + f(v) \in A + f(A),\, y = f(v) \in f(A), z = w \in A,$ and $t = w + f(u) \in A+f(A)$. Therefore, let $N$ be the number of solutions of the equation \eqref{equa1}, we have $N \ge |A|^3.$ On the other hand, by using Cauchy-Schwarz inequality, on has
\begin{align*} 
N^2 &\leq \left|f(A) + A \right| \cdot \left|\left\{ (x,y,z,z',y',z') \in \left((A+f(A)) \times f(A)\times A\right)^{2}: f(x-y)+z=f(x'-y')+z'\right\}\right| \notag \\
 &=: \left| f(A) + A \right| \cdot |E|
\end{align*}
Hence, 
\begin{equation}\label{iequa2}
|A|^6 \leq \left| A + f(A) \right| \cdot |E|
\end{equation}
Similarly, to bound $|E|$, we apply Theorem \ref{P-P-Y} for the following set of points and set of planes:
\begin{align*}
Q &:= \left\{\left(ax,\, y',\, ax^2+bx - z - ay'^2 + by' \right) \, : \, (x,y',z) \in (A+f(A))\times f(A) \times A \right\} \\
\Pi &:= \left\{ -2y \cdot X + 2ax'\cdot Y + Z = ax'^2+bx' + z'- ay^{2} + by \, : \, (x',y,z') \in (A+f(A)) \times f(A) \times A \right\}.
\end{align*}
Note that $|Q|=|\Pi| \leq |A+f(A)||A||f(A)| \leq |A+f(A)||A|^{2}$ and $|E| \leq I(Q,\Pi)$, where $I(Q,\Pi)$ is number of incidences between $Q$ and $\Pi$. Therefore, it follows from \eqref{iequa2} and applying Theorem \ref{P-P-Y}, one gets
\begin{align*}
|A|^{6} &\leq |A+f(A)|I(Q,\Pi) \\ 
&\leq |A+f(A)|\left(\dfrac{|A+f(A)|^{2}|A|^{4}}{q^{r}}+ q^{2r-1}|A+f(A)||A|^{2} \right) 
\end{align*}
Hence,
\[ |A|^{2}q^{r} \leq |A+f(A)|^{3} \left(1+\frac{q^{3r-1}}{|A+f(A)||A|^{2}}\right).\]
Therefore, if $|A+f(A)||A|^{2 \geq q^{3r-1}}$ then we have  
\begin{align*}
|f(A)+A| \geq 2^{-\frac{1}{3}}|A|^{2/3}q^{r/3},
\end{align*}
which completes the proof of Theorem \ref{theo.f(A)+A}.

\section{Proof of Theorem \ref{AA+AA}}
For each $m \in \mathcal{R},$ consider the equation 
\begin{equation*}
xy + zt = m , \quad \text{where} \quad \, x,y,z,t \in A.
\end{equation*}
It follows from Cauchy-Schwarz inequality that
\begin{align*}
|A|^{8} &\leq |AA+AA| \left| \left\{ (x,y,z,t,x',y',z',t')\in A^8: xy+zt=x'y'+z't' \right\} \right| \\
&\leq |AA+AA| \cdot |A-A|^2 \left|\left\{ (x,y,t,x',t',y',\alpha,\beta):\,  xy+xt+\alpha t = x'y'+x't'+\beta t'\right\}\right|,
\end{align*}
where $(\alpha ,\beta) \in (A-A) \times (A-A)$ is any chosen pair, and $(x,y,t,x',y',t') \in A^6.$

Define the point set $Q$ and plane set $\Pi$ as follows
\begin{align*}
Q &:= \left\{ (x,y',xt+\alpha t): (x,y',t) \in A^3 \right\},\\
\Pi &:= \left\{ y \cdot X - x' \cdot Y + Z = x't'+\beta t': \, (y,x',t') \in A^3 \right\}.
\end{align*}
Note that $|Q|,|\Pi|\leq |A|^{3}$, and the number of incidences between $Q$ and $\Pi$ satisfies
\[ \left|\{ (x,y,t,x',t',y',\alpha,\beta): xy+xt+\alpha t=x'y'+x't'+\beta t'\}\right| \leq I(Q,\Pi). \]
Then, applying Theorem \ref{P-P-Y}, we obtain 
\begin{align*}
|A|^{8} &\leq |AA+AA||A-A|^2 \cdot I(Q,\Pi) \\
&\leq |AA+AA||A-A|^2 \cdot \left(\frac{|A|^{6}}{q^{r}}+ q^{2r-1}|A|^{3} \right)
\end{align*}
Therefore, if $|A|^{3} \geq q^{3r-1}$ then 
\[ \dfrac{1}{2}|A|^{2}q^{r} \leq |AA+AA||A-A|^{2},\]
or equivalently,
\begin{equation*}
\max\left\{ |A-A|, |AA+AA| \right\} \geq 2^{-\frac{1}{3}}|A|^{2/3}q^{r/3}.
\end{equation*}
This completes the proof of the theorem.

\section{Proof of Theorem \ref{AA.A^d+A^d}}

We first observe that if $|A| \ll q^{r - 2/5}$ then 
\[ |A^d + A^d||AA|^2 \gg |A||AA|^2 \gg \dfrac{q^{6r-2}}{|A|^3} \gg q^r|A|^2.\] 
Therefore, we only need to consider the case of $|A| \gg q^{r-2/5}.$ Since $|\mathcal{R}^0| = q^{r-1},$ without loss of generality, we can assume that $A \subset \mathcal{R}^*.$ We define the $d$-power energy of $A$ by
\[  E_{d}(A)= \left|\left\{ a^{d}+b^{d}\,=\,c^{d}+e^{d}: a,b,c,d \in A \right\} \right|.\] 
For $f,g \in A$, we have
\begin{align*}
E_{d}(A) &= |A|^{-2} \left|\left\{ a^{d} + \frac{(bf)^{d}}{f^{d}} = c^{d} + \dfrac{(eg)^{d}}{g^{d}}\right\}\right| \\
&\leq |A|^{-2}\left|\left\{ a^{d}+\frac{h^{d}}{f^{d}} = c^{d}+\dfrac{k^{d}}{g^{d}}: a,f,c,g \in A,\,  h,k \in AA \right\}\right|.
\end{align*}
We define the multi-set of points with coordinates $\left(a^{d},\, 1/f^{d}, \,k^{d}\right)$ and the multi-set of planes with equation $X+ h^{d} \cdot Y-\left(\dfrac{1}{g^{d}}\right) \cdot Z = c^{d}$. Since $A \subset \mathcal{R}^*$, the number of solutions of the equation $x^d=t$ for $x,t \in A$ is at most $d$. Therefore, these points and planes may have weights up to $d^3$, implying that $\omega \leq d^3,$ and $W=2|A|^{2}|AA|.$

From the point-plane weighted incidences (Theorem \ref{CorY1}) and using the Cauchy Schwarz inequality, we have
\begin{align*}
|A|^{4} \ll E_{d}(A) |A^{d}+A^{d}|.
\end{align*}
Hence,
\[ |A|^{6} \ll \left| A^{d}+A^{d}\right|\left( \frac{1}{q^{r}}|A|^{4}|AA|^{2}+q^{2r-1}|A|^{2}|AA| \right). \]
In other words,
\begin{align*} 
q^{r}|A|^{2} \ll \left|A^{d}+A^{d}\right| |AA|^{2} \left( 1+\frac{q^{3r-1}}{|AA||A|^{2}} \right).
\end{align*}
Therefore, if $|AA|.|A|^{2} \geq q^{3r-1}$ then
\begin{align*}
\left|A^{d}+A^{d}\right| |AA|^{2} \gg q^{r}|A|^{2},
\end{align*}
which completes the proof of Theorem \ref{AA.A^d+A^d}.

\section{Further Remarks}

\begin{itemize}

\item Note that, using the same arguments as in the proof of Theorem \ref{mainth3-1}, we cannot extend the theorem for more general quadratic polynomials $f(x,y,z) = axy + bxz + R(s) + S(y) + T(z)$ and $f(x,y,z) = axy + bxz + czy + R(s) + S(y) + T(z)$. The main reason is that we cannot control the number of solutions of the equations $T(z) + bxz = n$ and $T(z) + bxz + cyz = n$ as these equations may have up to $q^{r-1}$ solutions.

\item Given the commutative ring $\mathcal{R}$ and its units $\mathcal{R}^*$, we can define its ring of fraction $\mathcal{Q}= (\mathcal{R}^*)^{-1}\mathcal{R}.$ This ring of fraction defines an equivalence relation on $\mathcal{R} \times \mathcal{R}^*$ that $(m,m') \sim (r,r')$ if and only if $mr'=m'r$. We write $\dfrac{m}{m'}=\dfrac{r}{r'}$ if $(m,m') \sim (r,r')$. 

We can define the collinearity property in $\mathcal{R} \times 
\mathcal{R}$: three points $(a,a'),(b,b'),(c,c') \in \mathcal{R}\times \mathcal{R}$ are collinear if there exists a number $k \in \mathcal{Q}$ such that $(a-b)=k(c-b)$ and $(a'-b')=k(c'-b')$. The set of points, which are collinear to an initial point $(u,v)$ via a fixed number $k \in \mathcal{Q}$, is a line.

Using the same technique, we have the following geometric type result.

\begin{theorem}\label{T(A).L(P)}
Let $ T(P) $ be the number of collinear triples in the point set $ P = A \times A \subset \mathcal{R} \times \mathcal{R}$ and $L(P)$ be the set of lines connecting pairs of distinct points of $P= A \times A$, then
 \begin{align*} 
 T(P)\leq q^{2r-1}\,|A|^{3}+ \frac{|A|^{6}}{q^{r}}\, + 2.|A|^{4},
 \end{align*}
 and
 \begin{align*} 
 |L(P)| \gg \min\left\{ q^{2r}, \frac{|A|^{6}}{q^{4r-2}}\right\}.
 \end{align*}
\end{theorem}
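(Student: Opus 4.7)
The plan is to encode collinearity of three points in $P = A \times A$ as a point--plane incidence in $\mathcal{R}^3$, invoke Theorem~\ref{P-P-Y} to bound $T(P)$, and then extract the lower bound on $|L(P)|$ via Hölder's inequality. First I would argue that three points $(a,a'), (b,b'), (c,c')$ are collinear in the sense of the definition above if and only if the determinant identity $(a-b)(c'-b') = (a'-b')(c-b)$ holds; a short case analysis on the valuations of $c-b$ and $c'-b'$ reduces the existence of the slope $k \in \mathcal{Q}$ to this identity. Rearranging as a linear equation in $(b,b')$ gives
\[
b(a'-c') + b'(c-a) + (ac' - a'c) = 0.
\]

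Next I would view $(b,b',1)\in \mathcal{R}^3$ as a point parameterised by $(b,b')\in A\times A$, and read the above as a plane equation in $(X,Y,Z)$ whose coefficient vector $(a'-c', c-a, ac'-a'c)$ is parameterised by $(a,a',c,c') \in A^4$. Thus set
\[
Q = \bigl\{(b,b',1) : (b,b') \in A \times A\bigr\}, \quad \Pi = \bigl\{(a'-c')X + (c-a)Y + (ac'-a'c)Z = 0 : (a,a',c,c') \in A^4,\ (a,a') \neq (c,c')\bigr\},
\]
so $|Q| = |A|^2$ and (as a multiset, in the spirit of the proof of Theorem~\ref{mainth3-1}) $|\Pi| \leq |A|^4$. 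The excluded case $(a,a') = (c,c')$ yields the trivial equation, which is not a plane but contributes at most $|A|^4$ tautological collinear triples through the free choice of the middle point; the further coincidences $b = a$ or $b = c$ produce at most another $|A|^4$. Theorem~\ref{P-P-Y} then gives
\[
T(P) \leq I(Q,\Pi) + 2|A|^4 \leq \frac{|Q||\Pi|}{q^r} + q^{2r-1}|Q|^{1/2}|\Pi|^{1/2} + 2|A|^4 \leq \frac{|A|^6}{q^r} + q^{2r-1}|A|^3 + 2|A|^4,
\]
which is the first inequality.

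For the second inequality, for each $\ell \in L(P)$ write $n_\ell = |\ell \cap P|$. Since every ordered pair of distinct points of $P$ lies on at least one line of $L(P)$,
\[
\sum_{\ell \in L(P)} n_\ell^2 \geq \sum_{\ell \in L(P)} n_\ell(n_\ell - 1) \geq |P|(|P|-1) \gg |A|^4,
\]
while $\sum_{\ell} n_\ell^3$ counts ordered triples of (not necessarily distinct) points lying on a common line of $L(P)$, so $\sum_{\ell} n_\ell^3 \ll T(P) + |A|^4$ (the correction absorbing ordered triples with repeated coordinates). Applying Hölder's inequality with exponents $(3/2,3)$ in the form $\sum_\ell n_\ell^2 \leq (\sum_\ell n_\ell^3)^{2/3} |L(P)|^{1/3}$ yields
\[
|L(P)| \gg \frac{(\sum_\ell n_\ell^2)^3}{(\sum_\ell n_\ell^3)^2} \gg \frac{|A|^{12}}{T(P)^2} \gg \min\!\left\{q^{2r},\, \frac{|A|^6}{q^{4r-2}}\right\},
\]
the final step by splitting on whether $|A|^6/q^r$ or $q^{2r-1}|A|^3$ dominates $T(P)$ (the $|A|^4$ summand is dominated by these whenever $r \geq 1$).

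The main obstacle is really the first step: checking the equivalence between the $\mathcal{Q}$-slope definition of collinearity and the symmetric determinant identity in the presence of zero divisors, and correspondingly verifying the two line-counting statements above (namely that every pair of distinct points of $P$ contributes to at least one line of $L(P)$, and that the ring-theoretic over-counting in $\sum_\ell n_\ell^3$ is absorbed by the $|A|^4$ correction) in $\mathcal{R}$ rather than in a field. Once these are secured, the rest is the standard combination of Yazici's incidence bound with Hölder.
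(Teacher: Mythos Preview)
Your overall strategy coincides with the paper's: reduce the collinearity count to a point--plane incidence problem in $\mathcal{R}^3$, apply Theorem~\ref{P-P-Y}, and then deduce the bound on $|L(P)|$ via H\"older. The second half of your argument is essentially identical to the paper's.

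The difference is in the specific incidence encoding, and here your version has a real gap. You take points $(b,b',1)$ with $|Q|=|A|^2$ and planes $(a'-c')X+(c-a)Y+(ac'-a'c)Z=0$ with $|\Pi|\le |A|^4$; the paper instead uses the balanced split
\[
Q=\{(c-b,\,b,\,a'(c-b))\},\qquad \Pi=\{\,b'X-(c'-b')Y-Z=-a(c'-b')\,\},
\]
so that $|Q|,|\Pi|\le |A|^3$. Arithmetically either split feeds the same product $|Q||\Pi|\le |A|^6$ into Theorem~\ref{P-P-Y}, so you obtain the same bound---\emph{provided} the theorem applies. The problem is that in your encoding the coefficient vector $(a'-c',\,c-a,\,ac'-a'c)$ can have all three entries non-units: whenever $a-c$ and $a'-c'$ both lie in the maximal ideal $(z)$, the identity $ac'-a'c=(a-c)c'-c(a'-c')$ forces the third coefficient into $(z)$ as well. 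Such an equation does not define a plane in $\mathcal{R}^3$ in the sense required by Yazici's theorem (its solution set is too large), so Theorem~\ref{P-P-Y} is not directly applicable to that portion of $\Pi$. The paper's encoding avoids this entirely because the $Z$-coefficient is identically $-1$, hence always a unit; this is the point of choosing the asymmetric parametrisation rather than the homogeneous one you wrote down. You flagged the zero-divisor subtleties in the collinearity equivalence, but this second ring-theoretic obstruction---ensuring the planes are genuine planes---is the one that actually bites.
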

\begin{proof}
Given that three points $(a,a'),(b,b'),(c,c') \in P$ are collinear. We consider the following cases.
\begin{itemize}
\item[1.] If $a=b$, then  the collinearity implies $c=b$. The number of collinear triples in this case is $|A|^{4}$. 
\item[2.] If $a'=b'$, then similarly  $c'=b'$. Thus, the number of collinear triples in this case is also $|A|^{4}$.
\item[3.] If $a \neq b, c \neq b; a' \neq b', c' \neq b'$, then the collinearity is expressed by the condition
$(a-b)(c'-b')=(a'-b')(c-b)$.

\end{itemize}
Thus, all cases together imply:
\begin{eqnarray} 
T(P) &\leq & \left|\left\{ (a,b,c,a',b',c') \in A^{6}: (a-b)(c'-b')=(a'-b')(c-b) : \, a \neq b \neq c ,a' \neq b' \neq c' \right\}\right| \nonumber \\ & & + 2|A|^{4}.\label{TA}
\end{eqnarray}
Define a point set $Q$ and plane set $\Pi$ as follows
\begin{align*}
Q &:= \left\{ \left(c-b,\,b,\,a'(c-b)\right):\, b\neq c,\, (a',b,c) \in A \times A \times A \right\},\\
\Pi &:=\left\{ b' X - Y(c'-b') - Z = -a(c'-b'): \, b' \neq c', (a,b',c') \in A \times A \times A \right\}.
\end{align*}
Note that $|Q|=|\Pi| \leq |A|^{3}$, and the number of incidences $I(Q, \Pi)$ between $Q$ and $\Pi$ is
\[ \left|\left\{ (a,b,c,a',b',c') \in A^{6}: \, (a-b)(c'-b')=(a'-b')(c-b), \, a\neq b \neq c , \, a' \neq b' \neq c' \right\}\right|.\]
Therefore, applying Theorem \ref{P-P-Y}, we have
\begin{align*}
|I(Q,\Pi)| &\leq \dfrac{1}{q^{r}}\,|Q|\,|\Pi|+ q^{2r-1}\,|Q|^{1/2}\,|\Pi|^{1/2}\\
&\leq  \frac{1}{q^r}\,|A|^{6}+q^{2r-1}|A|^{3}.
\end{align*}
Together with (\ref{TA}), we have
\begin{align*}
|T(A)|\leq q^{2r-1}\,|A|^{3}+ \frac{|A|^{6}}{q^{r}} + 2|A|^{4},
\end{align*}
which completes the proof of the first part of Theorem \ref{T(A).L(P)}.

Now, we are going to prove the second part. By the definition of $L(P)$ and the H\"{o}lder inequality, we have
\[ |A|^{4} \ll \displaystyle \sum_{l \in L(P)} n(l)^{2}\ll |L(P)|^{\frac{1}{3}}\,T(A)^{\frac{2}{3}},\]
in which $n(l)$ is the number of points $P$ supported on line $l \in L(P)$.\\
Using the result in the first part, we have
\begin{align*}
|A|^{4} \ll |L(P)|^{1/3} \left(q^{2r-1}\,|A|^{3} + 2|A|^{4} + \frac{|A|^{6}}{q^{r}}\right)^{2/3}.
\end{align*}
Hence,
\[ |A|^{6} \ll \left|L(P)\right|^{1/2}\left(\frac{|A|^{6}}{q^{r}} + q^{2r-1}|A|^{3}\right).\]
In other words, we conclude that
\[ L(P)^{1/2} \gg \dfrac{q^{r}\,|A|^{6}}{|A|^{6}+q^{3r-1}.\,|A|^{3}}=\frac{q^{r}\,|A|^{3}}{|A|^{3}+q^{3r-1}} \gg \min\left\{ q^{r},\, \frac{|A|^3}{q^{2r-1}}\right\},\]
which completes the proof of second part of Theorem \ref{T(A).L(P)}. 
\end{proof}
\end{itemize}

\bibliographystyle{amsplain}

\end{document}